\documentclass[11pt,a4paper]{article}
\usepackage[utf8]{inputenc}
\usepackage[T1]{fontenc}
\usepackage{amsmath, amssymb, amsthm}
\usepackage{geometry}
\usepackage{enumitem}
\usepackage{mathrsfs}
\usepackage{bbm}
\usepackage{graphicx}
\usepackage{bm}
\usepackage{enumitem}

\newtheorem{theorem}{Theorem}[section]
\newtheorem{lemma}[theorem]{Lemma}

\theoremstyle{definition}

\newtheorem{remark}{Remark}[section]

\usepackage{tikz,xcolor,hyperref}

\begin{document}


\title{\textbf{Liouville Type Theorem for the Steady Fractional Compressible MHD Equations in $\mathbb{R}^{3}$}\footnote{These authors contributed equally to this work. Authors are listed alphabetically by surname then given name. Authors equally share the first authorship.}}
\author{Zhenyuan Liu\footnote{mx120230298@stu.yzu.edu.cn} \and Weihua Wang \\
School of Mathematical Sciences, Yangzhou University, Yangzhou, China}
\date{}
\maketitle

\begin{flushleft}
\textbf{Correspondence:} Weihua Wang \href{mailto:wangvh@163.com,wangweihua15@mails.ucas.ac.cn}{wangvh@163.com,wangweihua15@mails.ucas.ac.cn}
\end{flushleft}

\noindent \textbf{Received:} 25 May 2026 \quad \textbf{Revised:} xx xxx 2026 \quad \textbf{Accepted:} xx xxx 2026

\vspace{0.5cm}

\noindent \textbf{Keywords:} Stationary fractional compressible MHD equations,  Liouville-type theorem, Caffarelli--Silvestre extension, Nonlocal operator\\
\noindent \textbf{MSC Classification:~}{ 76W03,  35B53, 26A33, 35Q35}
\section*{ABSTRACT}
This paper is concerned with the Liouville-type problem for the stationary fractional compressible magnetohydrodynamics (MHD) equations.  The main difficulty comes from the nonlocal fractional Laplace operator $(-\Delta)^s$.  To overcome it, we combine the Caffarelli-Silvestre extension technique with  truncation arguments. Under suitable regularity and decay conditions, we prove that the only solution is trivial, establishing a Liouville-type theorem.

\section{Introduction}
In this paper, we consider the Liouville type problem  for the following stationary fractional compressible MHD system in $\mathbb{R}^{3}$:
\begin{equation}\label{eq1.1}\tag{fMHD}
\begin{cases}
    (-\triangle)^{\alpha}u+\text{div}(\rho u\otimes u)-(b \cdot \nabla)b+\nabla p = -\frac{1}{2}\nabla|b|^2 ,\\
    (-\triangle)^{\beta}b+(u \cdot \nabla)b-(b \cdot \nabla)u = 0,\\
     \text{div}(\rho u )=0,\\
     \text{div}~b = 0,
\end{cases}
\end{equation}
where $u, ~b$ and $\rho$ represent the velocity fields, magnetic fields and density, respectively. And
\begin{equation}\label{eq1.2}
  p(\rho)=\eta\rho^\gamma
\end{equation}
is the pressure with constant $\eta>0$ and the adiabatic exponent $\gamma\geq1$. The fractional operator $(-\triangle)^{\alpha},~(-\triangle)^{\beta}$ is defined at the Fourier multiplier by the symbol $|\xi|^{2\alpha}$ and $|\xi|^{2\beta}$, respectively.

Setting $\alpha=1$, $b=0$ and constant $\rho$, the fractional MHD equations \eqref{eq1.1} reduce to the classical 3D stationary Navier-Stokes equations:
    \begin{equation*}
    \begin{cases}
    -\triangle u + (u\cdot\nabla)u+\nabla p = 0, \\
    \nabla\cdot u = 0.
    \end{cases}
    \end{equation*}
    D-solutions (finite Dirichlet integral, $\lim\limits_{|x|\to\infty}u(x)=0$) are standard solutions.
Gilbarg and Weinberger \cite{GW1978} proved all 2D D-solutions trivial. For 3D: Galdi \cite{Gal2011} showed $u\equiv0$ if $u\in L^{\frac{9}{2}}(\mathbb{R}^3)$; Chae and Wolf \cite{DCha2016} improved this via $\int_{\mathbb{R}^3}\frac{|u|^2}{\ln(2+|u|^{-1})}dx<\infty$; Chae \cite{C2014} via $\triangle u\in L^{\frac{5}{6}}(\mathbb{R}^3)$; Seregin \cite{SG2016} for $L^6(\mathbb{R}^3)\cap BMO^{-1}$; Kozono et al. \cite{KH2017} for vorticity $\sim|x|^{-5/3}$ or small $\|u\|_{L^{9/2,\infty}}$; Chamorro et al. \cite{DO2018} extended to $3\leq p\leq\frac{9}{2}$, Yuan and Xiao \cite{YX2020} to $2\leq p\leq\frac{18}{5}$.

Setting $\alpha=1$ and $b=0$, system \eqref{eq1.1} reduces to the compressible Navier-Stokes equations, whose steady Liouville-type theorems have been extensively studied.
Chae \cite{CD2012} showed that for $2\le d\le 6$,
    \[
    \|\rho\|_{L^\infty(\mathbb{R}^d)}+\|\nabla u\|_{L^2(\mathbb{R}^d)}+\|u\|_{L^{\frac{d}{d-1}}(\mathbb{R}^d)}<\infty,
    \]
    and for $d\ge7$,
    \[
    \|\rho\|_{L^\infty(\mathbb{R}^d)}+\|\nabla u\|_{L^2(\mathbb{R}^d)}+\|u\|_{L^{\frac{d}{d-1}}(\mathbb{R}^d)}+\|u\|_{L^{\frac{3d}{d-1}}(\mathbb{R}^d)}<\infty,
    \]
    imply trivial solutions ($u=0$, constant $\rho$). Li and Yu \cite{LY2014} improved this: for $d\ge4$, $\rho\in L^\infty(\mathbb{R}^d)$ and $u\in\dot{H}^1(\mathbb{R}^d)$ suffice; for $d=2,3$, $u\in L^{\frac{3d}{d-1}}(\mathbb{R}^d)$ is additional.

For \(\alpha=\beta=1\) and constant \(\rho\), \eqref{eq1.1} reduces to stationary incompressible MHD equations. Schulz \cite{Sch2019} showed \(u=b\equiv0\) if \((u,b)\in L^p(\mathbb{R}^3)\cap BMO^{-1}(\mathbb{R}^3)\) (\(p\in(2,6]\)). Yuan and Xiao \cite{YX2020} improved this to \(u,b\in L^p(\mathbb{R}^3)\) (\(2\le p\le\frac{9}{2}\)). Fan and Wang \cite{wangf2021} proved \(u=b\equiv0\) if \(u,b\in L_{x_1}^q L_{x_2}^q L_{x_3}^r(\mathbb{R}^3)\) (\(q,r\ge3\), \(\frac{2}{q}+\frac{1}{r}\ge\frac{2}{3}\)) or \(u,b\in L_{x_1}^p L_{x_2}^q L_{x_3}^r(\mathbb{R}^3)\) (\(p,q,r\ge3\), \(\frac{1}{p}+\frac{1}{q}+\frac{1}{r}\ge\frac{2}{3}\)). These generalize isotropic Lebesgue theory via anisotropic integrability. For classical MHD results, see Wang and Guo \cite{WangGuo2024} and references therein.

Setting $\alpha=\beta=1$ in \eqref{eq1.1} yields the stationary compressible MHD equations. Wu \cite{wf2022} showed smooth solutions $(\rho, u, b)$ are trivial ($u=b\equiv 0$, constant $\rho$) if $\rho\in L^\infty(\mathbb{R}^3)$, $\int_{\mathbb{R}^{3}}(|\nabla u|^2+|\nabla b|^2)dx<\infty$, and $u,b$ satisfy one of the following:
\begin{enumerate}[label=(\arabic*)]
\item
$u,b\in L^{p,\infty}_{x_1}L^{q,\infty}_{x_2}L^{r,\infty}_{x_3}$ for $p,q,r\in(3,+\infty]$ with $\frac{1}{p}+\frac{1}{q}+\frac{1}{r}>\frac{2}{3}$,
\item
$u,b\in L^{p,s_1}_{x_1}L^{q,s_2}_{x_2}L^{r,s_3}_{x_3}$ for  $p,q,r\in(3,+\infty]$, $s_i\in(3,+\infty)$ with $\frac{1}{p}+\frac{1}{q}+\frac{1}{r}>\frac{2}{3}$,
\item
$u,b\in L^{p,\infty}_{x_1}L^{q,\infty}_{x_2}L^{r,\infty}_{x_3}$ for $p,q,r\in(\frac{3}{2},+\infty]$ with $\frac{1}{p}+\frac{1}{q}+\frac{1}{r}\geq1$,
\item
$u,b\in L^{p,s_1}_{x_1}L^{q,s_2}_{x_2}L^{r,s_3}_{x_3}$ for $p,q,r\in(\frac{3}{2},+\infty]$, $s_i\in(3,+\infty)$ with $\frac{1}{p}+\frac{1}{q}+\frac{1}{r}\geq1$.
\end{enumerate}
For more Liouville-type theorems for steady compressible fluid equations, see \cite{ZP2020,XA2018,ZP20202,ZY2018} and references therein.

Recently, Zeng~\cite{zengy2025} obtained Liouville-type theorems for \eqref{eq1.1}, proving that any solution $(u,b)\in\dot{H}^\alpha(\mathbb{R}^3)\times\dot{H}^\beta(\mathbb{R}^3)$ with $u=(u_1,u_2,u_3)$, $b=(b_1,b_2,b_3)$ and $(u_j,b_j)\in L^{\vec{p}_j}(\mathbb{R}^3)\times L^{\vec{q}_j}(\mathbb{R}^3)$ is trivial if
\[
\sum_{l=1}^3\frac1{p_{j,l}}\ge\frac23,\quad
\sum_{l=1}^3\frac1{q_{j,l}}\ge\frac23,\quad
p_{j,l},q_{j,l}\in[3,\infty),\quad \forall j,l=1,2,3.
\]

The main difficulty lies in the nonlocality of fractional operators \((-\triangle)^\alpha\) and \((-\triangle)^\beta\). For \(\alpha,\beta\in(0,1)\), we use the Caffarelli-Silvestre extension method to convert these fractional Laplacians into local degenerate elliptic equations in higher dimensions, enabling classical estimation techniques to derive a Liouville-type result.

Under \(u,b\in\dot{H}^\alpha(\mathbb{R}^3)\times\dot{H}^\beta(\mathbb{R}^3)\) and suitable integrability conditions, we prove solution triviality. Via the Sobolev embedding \(\dot{H}^\alpha(\mathbb{R}^3)\hookrightarrow L^{\frac{6}{3-2\alpha}}(\mathbb{R}^3)\) and H"{o}lder inequality, we derive \(\frac{1}{2}\leq\alpha,\beta<1\), consistent with Zeng \cite{zy2025} in anisotropic Lebesgue spaces, leading to the following conclusion.

\begin{theorem}\label{th1.1}
Let $\frac{1}{2}\leq\alpha,~\beta<1$, $\rho\in L^{\infty}(\mathbb{R}^{3})$ and $(u,b) \in \dot{H}^\alpha(\mathbb{R}^3)\times\dot{H}^\beta(\mathbb{R}^3)$ be a smooth solution to system $\eqref{eq1.1}$. If $u$ and $b$ satisfy:
$\begin{cases}
        u \in L^p(\mathbb{R}^3)\cap L^q(\mathbb{R}^3) &\text{with}~~ 2\leq p\leq\frac{9}{2},~1\leq q \leq\frac{3}{2},\\
        b \in L^p(\mathbb{R}^3) &\text{with}~~ 2\leq p\leq\frac{9}{2},
        \end{cases}$
then $u=b\equiv0$ and $\rho$ is constant.
\end{theorem}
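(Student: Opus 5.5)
We test the momentum equation against $u\,\phi_R$ and the induction equation against $b\,\phi_R$, where $\phi_R(x)=\phi(x/R)$ is a smooth cutoff with $\phi\equiv1$ on $B_1$ and support in $B_2$. The nonlocal terms are handled through the Caffarelli--Silvestre extension. Let $U^*(x,y)$ and $B^*(x,y)$ solve $\operatorname{div}(y^{1-2\alpha}\nabla U^*)=0$ and $\operatorname{div}(y^{1-2\beta}\nabla B^*)=0$ in $\mathbb{R}^4_+$, with traces $u$ and $b$. Then $(-\Delta)^\alpha u=-c_\alpha\lim_{y\to0}y^{1-2\alpha}\partial_yU^*$, and similarly for $b$. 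Extend the cutoff to $\Phi_R(x,y)=\phi(x/R)\psi(y/R)$ and integrate by parts in $\mathbb{R}^4_+$. This gives
\[
\int_{\mathbb{R}^3}(-\Delta)^\alpha u\cdot u\,\phi_R\,dx=c_\alpha\int_{\mathbb{R}^4_+}y^{1-2\alpha}|\nabla U^*|^2\Phi_R+c_\alpha\int_{\mathbb{R}^4_+}y^{1-2\alpha}\nabla\tfrac{|U^*|^2}{2}\cdot\nabla\Phi_R ,
\]
and the analogous identity for $b$. The first term converges to $c\|u\|_{\dot H^\alpha}^2$, since the weighted Dirichlet energy of the extension equals the $\dot H^\alpha$ seminorm. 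The error term is bounded by $R^{-1}\big(\int_{\mathcal{B}_{2R}^+\setminus\mathcal{B}_R^+}y^{1-2\alpha}|\nabla U^*|^2\big)^{1/2}\big(\int_{\mathcal{B}_{2R}^+}y^{1-2\alpha}|U^*|^2\big)^{1/2}$. For the last factor we use the Poisson-kernel bound $\|U^*(\cdot,y)\|_{L^p}\le\|u\|_{L^p}$ and Hölder in $x$ with $p\ge2$, which gives roughly $R^{(2-2\alpha)/2}\cdot R^{3(1/2-1/p)}\|u\|_{L^p}$. The product with $R^{-1}$ has exponent $\tfrac12-\alpha-\tfrac3p+\tfrac32\cdot$, which we must check is $\le0$. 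This is where the lower bound $\alpha\ge\frac12$ should enter. When the exponent is exactly zero, the vanishing of the annular energy still makes the error go to $0$.

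Next come the nonlinear terms. Since $\operatorname{div}(\rho u)=0$, we have $\int\operatorname{div}(\rho u\otimes u)\cdot u\phi_R=-\frac12\int\rho|u|^2u\cdot\nabla\phi_R$. This is bounded by $\|\rho\|_\infty R^{-1}\int_{R<|x|<2R}|u|^3\le C R^{2-9/p}\|u\|_{L^p(R<|x|<2R)}^3$, which tends to $0$ for $3\le p\le\frac92$. For $2\le p<3$ we interpolate with the $L^{6/(3-2\alpha)}$ bound coming from $\dot H^\alpha$, and $\frac{6}{3-2\alpha}\ge3$ precisely when $\alpha\ge\frac12$. The magnetic terms $(b\cdot\nabla)b\cdot u$, $\frac12\nabla|b|^2\cdot u$, $(u\cdot\nabla)b\cdot b$ and $(b\cdot\nabla)u\cdot b$ cancel after summation, using $\operatorname{div}b=0$. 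What remains is cutoff errors of the form $R^{-1}\int|u||b|^2+|b|^2|u|$, which are treated in the same way.

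The pressure term causes the main trouble in the compressible setting. The term $\int\nabla p\cdot u\phi_R$ cannot be integrated away using $\operatorname{div}u=0$, because only $\operatorname{div}(\rho u)=0$ holds. The plan is to write $\nabla p=\rho\nabla h(\rho)$ with enthalpy $h(\rho)=\frac{\eta\gamma}{\gamma-1}\rho^{\gamma-1}$ (or $\eta\ln\rho$ when $\gamma=1$). Then $\int\nabla p\cdot u\phi_R=-\int h(\rho)\rho u\cdot\nabla\phi_R$. This requires $h(\rho)$ to be bounded, which in turn requires $\rho$ bounded away from $0$ when $\gamma=1$. In the general case we can instead write $\int\nabla p\cdot u\phi_R=-\int p\,\operatorname{div}u\,\phi_R-\int p\,u\cdot\nabla\phi_R$.

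This is where I expect the hypothesis $u\in L^q$ with $1\le q\le\frac32$ to be essential. With $\rho,p\in L^\infty$, the boundary piece satisfies $R^{-1}\int_{R<|x|<2R}|u|\le CR^{2-3/q}\|u\|_{L^q}$, and this tends to $0$ exactly when $q\le\frac32$ (for $q=\frac32$, by tail smallness). So I would use the enthalpy form and bound $|h(\rho)\rho|\le C$ by $\rho\in L^\infty$, handling $\gamma=1$ via $\rho\ln\rho$ bounded. Letting $R\to\infty$ then gives $\|u\|_{\dot H^\alpha}^2+\|b\|_{\dot H^\beta}^2=0$. Hence $u$ and $b$ are constant, and their $L^p$ integrability forces $u=b\equiv0$. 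Finally, the momentum equation reduces to $\nabla p(\rho)=0$, so $\rho$ is constant.
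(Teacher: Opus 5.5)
The step that fails is your claim that the four magnetic terms cancel after summation using $\mathrm{div}\, b=0$. Only the pair $-(b\cdot\nabla)b\cdot u\,\phi_R$ and $-(b\cdot\nabla)u\cdot b\,\phi_R$ cancels that way, leaving $\int(b\cdot u)(b\cdot\nabla\phi_R)$. The other two terms do not cancel. Pointwise, $\frac12\nabla|b|^2\cdot u=(u\cdot\nabla)b\cdot b=\frac12 u\cdot\nabla|b|^2$, and in the system as written both enter the summed identity with the same sign. Together they give
\[
\int_{\mathbb{R}^3}u\cdot\nabla|b|^2\,\phi_R\,dx=-\int_{\mathbb{R}^3}|b|^2(\mathrm{div}\,u)\,\phi_R\,dx-\int_{\mathbb{R}^3}|b|^2\,u\cdot\nabla\phi_R\,dx .
\]
The first integral on the right is not a cutoff error. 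Only $\mathrm{div}(\rho u)=0$ is available, so this term has no sign and no decay in $R$. Moreover, $u\in\dot H^\alpha$ with $\alpha<1$ gives no control of $\mathrm{div}\,u$. This is exactly the obstruction you correctly identified for $\nabla p\cdot u$; it hits the magnetic pressure as well. The term would cancel if the induction equation contained $b\,\mathrm{div}\,u$, as in the physical form $-\nabla\times(u\times b)$. However, $\eqref{eq1.1}_2$ as stated does not contain it. The paper does not close this gap either: its identity \eqref{eq3.1} still carries $\int(\nabla\cdot u)\frac{|b|^2}{2}\psi_R$, and that term is simply absent from \eqref{eq3.4}. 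So for the stated system you need either the extra $b\,\mathrm{div}\,u$ term, in which case your energy identity is right, or an additional argument that neither you nor the paper supplies.

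There is also a fixable error in the extension remainder. If you run your Poisson-kernel and H\"older bound with the hypothesis exponent $p$, you get the factor $R^{\frac32-\alpha-\frac3p}$. This is positive whenever $p>\frac{6}{3-2\alpha}$; for example, $\alpha=\frac12$ and $p=\frac92$ give $R^{1/3}$. The condition $\alpha\ge\frac12$ does not help here. Instead, use $p_*=\frac{6}{3-2\alpha}$, which is available from $\dot H^\alpha\hookrightarrow L^{p_*}$. With $p_*$ the exponent is exactly $0$, and the vanishing annular energy gives the limit. This is what the paper does in \eqref{3.10}, via Lemma \ref{le2.5} and \eqref{2.3}. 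Where $\alpha,\beta\ge\frac12$ really enters is in the cubic terms. For $2\le p<3$ you get $u\in L^p\cap L^{6/(3-2\alpha)}\subset L^3$ and $b\in L^p\cap L^{6/(3-2\beta)}\subset L^3$, so those terms are $O(R^{-1})$. This interpolation is correct and much simpler than the localized Gagliardo--Nirenberg estimates of Subsection \ref{subsec3.2}. Your enthalpy treatment of the pressure matches the paper's; it uses $\mathrm{div}(\rho u)=0$ and the boundedness of $\rho\ln\rho$ when $\gamma=1$.
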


\begin{remark}
In the proof, applying H\"{o}lder inequality (Subsection \ref{subsubsec3.1.1}) gives $$\|u\|_{\mathrm{L}^3(B_{\frac{3}{2}R}\backslash B_{\frac{3}{4}R})}\lesssim R^{\frac{3}{2}-\frac{3}{p}}\|u\|_{\mathrm{L}^p(B_{\frac{3}{2}R}\backslash B_{\frac{3}{4}R})},$$ requiring $p\geq3$. Since $\dot{H}^\alpha(\mathbb{R}^3)\hookrightarrow L^{\frac{6}{3-2\alpha}}(\mathbb{R}^3)$, the condition $u\in \dot{H}^\alpha(\mathbb{R}^3)$ implies $p\leq\frac{6}{3-2\alpha}$, hence $3\leq\frac{6}{3-2\alpha}$ or $\alpha\geq\frac{1}{2}$. The Caffarelli-Silvestre extension requires $\alpha <1$, so $\frac{1}{2}\leq\alpha<1$. The same logic applies to $\frac{1}{2}\leq\beta<1$.
\end{remark}

\begin{remark}
Zeng \cite{zy2025} studied Liouville-type results for fractional compressible MHD equations in anisotropic Lebesgue spaces. For $p_{j,1}=p_{j,2}=p_{j,3}$, his obtained range $3\leq p\leq\frac{9}{2}$ is contained in our results.
\end{remark}

The rest of the paper is organized as follows. Section 2 presents some cutoff functions, the Caffarelli-Silvestre extension \cite[Section 2.3]{CaSi2007}, and several auxiliary lemmas, while Section 3 is dedicated to proving Theorem \ref{th1.1}.

\section{Preliminaries}\label{Sec2}
Let $\|(u,b)\|_{X}^p:=\|u\|_{X}^p+\|b\|_{X}^p$, where $\|\cdot\|_X$ denotes the standard norm of the function space $X$. Throughout this paper, $C$ denotes a finite inessential constant (possibly varying line to line), and $B_R(x_0)$ denotes the ball of radius $R$ centered at $x_0$, with $B_R:=B_R(0)$ for $x_0=0$.
Let $\psi_R(x)\in C_c^\infty(\mathbb{R}^3)$ be a cut-off function satisfying
    $$\psi_R(x)=
    \begin{cases}
      1, & x\in B_{\frac{3}{4}R};\\
      0, & x\in B_{\frac{3}{2}R}^c,
    \end{cases}$$
    where $B_{\frac{3}{2}R}^c:=\mathbb{R}^3\setminus B_{\frac{3}{2}R}$ denotes the complement of the ball $B_{\frac{3}{2}R}$.   Then $\|\nabla^s\psi_R(x)\|_{L^\infty(\mathbb{R}^3)}\leq\frac{C}{R^s}$.

In addition, we set $\chi_R(y)$ be a real  smooth cut-off function on $\mathbb{R}_{+}\cup\{0\}$, defined by
\begin{eqnarray*}
  \chi_R(y)=\left\{
  \begin{array}{ll}
    1, & 0\leq y\leq\frac{3}{4}R; \\
    0, & \hbox{$y>\frac{3}{2}R$},
  \end{array}
\right.
\end{eqnarray*}
then $$|\chi_R'(y)|\leq MR^{-1}, ~|\chi_R''(y)|\leq MR^{-2}$$  for some  constant $M$ independent of $y \in \mathbb{R}_{+}\cup\{0\}$.

To overcome technical difficulties of the fractional Laplacian $(-\triangle)^{s}$ (compared with the Laplacian $\triangle$), we introduce the "Caffarelli-Sivestre" extension operators\cite[Section 2.3]{CaSi2007}, denoting by $\bar{\nabla},~\bar{\triangle}$ the differential operators on the upper half-space $\mathbb{R}^{4}_{+}$.

According to \cite[Section 2.3]{CaSi2007} and \cite[Theorem 2.3]{CMLC2020}, for any $\bm{w}\in\dot{H}^{s}(\mathbb{R}^{3})$ with $s\in(0,1)$, there exists a unique extension $\bm{w^*}$ of $\bm{w}$  in the weighted space $H^{1}(\mathbb{R}^{4}_{+},y^{\lambda_s})$ ( $\lambda_s = 1-2s$) satisfying
\begin{equation}\label{Eq2.1}
\begin{cases}
    \bar{\triangle}_{\lambda_s}\bm{w^*}(x,y) := \bar{\triangle}\bm{w^*}+\frac{\lambda_s}{y}\partial_{y}\bm{w^*} = \frac{1}{y^{\lambda_s}}\overline{\text{div}}(y^{\lambda_s}\bar{\nabla}\bm{w^*}) = 0,\\
    \bm{w^*}(x,0)=\bm{w}(x),\\
     (-\triangle)^{s}\bm{w}(x) = -\bm{C}_{s}\lim\limits_{y\rightarrow 0}y^{\lambda_s}\partial_{y}\bm{w^*}(x,y),\\
     \int_{\mathbb{R}^{3}}|(-\triangle)^{\frac{s}{2}}\bm{w}|^2 \text{d}x =  \int_{\mathbb{R}^{3}}|\xi|^{2s}|\hat{\bm{w}}(\xi)|^2 d\xi = \bm{C}_{s}\int_{\mathbb{R}^{4}_{+}}y^{\lambda_s}|\bar{\nabla}\bm{w^*}|^2 \text{d}x\text{d}y,
\end{cases}
\end{equation}
where $\bm{C}_{s}$ is a constant depending only on $s$.

For the integer-order case, $\nabla w = 0$ implies $w$ is constant. Extending this to the fractional order, the following conclusion follows directly from the proof of \cite[Theorem 2.1]{YJ2022}:

\begin{lemma}\label{le2.3}
\begin{equation*}
  \int_{\mathbb{R}^{4}_{+}}y^{\lambda_s}|\bar{\nabla}\bm{w^*}|^2 \text{d}x\text{d}y=0
\end{equation*}
 implies $w^*$ is almost constant everywhere.
\end{lemma}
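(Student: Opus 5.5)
Lemma \ref{le2.3} asserts that if the weighted Dirichlet energy of the Caffarelli--Silvestre extension vanishes, then $\bm{w^*}$ is constant almost everywhere. The plan is to show that $\bar{\nabla}\bm{w^*}=0$ almost everywhere on $\mathbb{R}^4_+$ and then use connectedness of the half-space.

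First, the weight $y^{\lambda_s}$ is strictly positive on $\mathbb{R}^4_+=\{(x,y):y>0\}$. The integrand $y^{\lambda_s}|\bar{\nabla}\bm{w^*}|^2$ is nonnegative, and its integral is zero. Hence it vanishes almost everywhere, and dividing by the positive weight gives $\bar{\nabla}\bm{w^*}=0$ a.e. in $\mathbb{R}^4_+$.

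Next, since $\bm{w^*}\in H^1(\mathbb{R}^4_+,y^{\lambda_s})$, it lies in $H^1_{\mathrm{loc}}$ of the open half-space. On every compact subset of $\{y>0\}$ the weight is bounded above and below by positive constants, so the weighted and unweighted local Sobolev spaces coincide there. A $W^{1,1}_{\mathrm{loc}}$ function on a connected open set with vanishing distributional gradient is a.e. equal to a constant. To see this, mollify: $\bm{w^*}_\varepsilon=\bm{w^*}\ast\phi_\varepsilon$ is smooth with $\bar{\nabla}\bm{w^*}_\varepsilon=0$ on $\{y>\varepsilon\}$, so it is constant there. Letting $\varepsilon\to0$, we get $\bm{w^*}_\varepsilon\to\bm{w^*}$ in $L^1_{\mathrm{loc}}$, and the constants must stabilize on the nested connected sets $\{y>\varepsilon\}$. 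Hence $\bm{w^*}\equiv c$ a.e. in $\mathbb{R}^4_+$.

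The only point requiring care is the passage to the boundary trace. In the application one wants $\bm{w}=\bm{w^*}(\cdot,0)$ to be constant as well. The trace operator from $H^1(\mathbb{R}^4_+,y^{\lambda_s})$ to $\dot H^s(\mathbb{R}^3)$, for $\lambda_s=1-2s\in(-1,1)$, is continuous. Therefore the trace of the constant function $c$ is $c$, and the trace identity in \eqref{Eq2.1} is consistent with $\|(-\triangle)^{s/2}\bm{w}\|_{L^2}=0$. This makes $\hat{\bm{w}}$ supported at the origin, so $\bm{w}$ is constant, in agreement with the previous step. I expect no real obstacle; the main subtlety is the local-unweighted reduction away from $y=0$, which the positivity of the weight handles.
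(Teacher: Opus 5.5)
Your proof is correct and follows the argument the paper relies on: the paper gives no details beyond the analogy with $\nabla w=0$ in the integer-order case and a pointer to the proof of \cite[Theorem 2.1]{YJ2022}, and your argument is exactly what that amounts to, namely that positivity of $y^{\lambda_s}$ on $\mathbb{R}^4_+$ forces $\bar{\nabla}w^*=0$ a.e., after which connectedness of the half-space makes $w^*$ constant. Your closing trace remark is not needed for the lemma, and Fourier support at the origin alone would only give a polynomial, but your observation that the trace of the constant $c$ is $c$ already yields $w\equiv c$.
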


\begin{remark}
  By \eqref{Eq2.1}, $\|\Lambda^{s}w\|_{L^{2}(\mathbb{R}^{3})}^{2} =\bm{C}_{s}\int_{\mathbb{R}^{4}_{+}}y^{\lambda_s}|\bar{\nabla}\bm{w^*}|^2 \text{d}x\text{d}y$.  Thus,  $\Lambda^{s}w =0$ implies $w$ is almost constant everywhere via Lemma \ref{le2.3}.
\end{remark}

\begin{lemma}[Lemma 2.2 in \cite{WX2018}]\label{le2.5}
  Let $\alpha \in (0, 1)$ and $u^*$ be the $\alpha$-extension of $u \in L^p(\mathbb{R}^3)$ given by \eqref{Eq2.1}, it follows that
  \begin{equation}\label{2.2}
     \left(\int\int_{R^4_+}y^{1-2\alpha}|u^*|^{\frac{(5-2\alpha)p}{3}} \text{d}x\text{d}y\right)^{\frac{3}{(5-2\alpha)p}}\leq C\|u\|_{L^p(\mathbb{R}^3)}. \\
  \end{equation}
\end{lemma}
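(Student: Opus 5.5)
The plan is to work directly with the Poisson-kernel representation of the $\alpha$-extension and to reduce \eqref{2.2} to the $L^p$-boundedness of the Hardy--Littlewood maximal function $\mathcal{M}$. For $\alpha\in(0,1)$ the solution of \eqref{Eq2.1} is given by $u^*(x,y)=(P_y*u)(x)$ with
\[
P_y(x)=c_\alpha\,\frac{y^{2\alpha}}{(|x|^2+y^2)^{\frac{3+2\alpha}{2}}},\qquad \int_{\mathbb{R}^3}P_y\,dx=1 .
\]
Two pointwise bounds on $u^*$ then follow.

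\emph{First bound.} $P_1$ is radial, decreasing and integrable, and $P_y(x)=y^{-3}P_1(x/y)$. The standard approximate-identity estimate therefore gives
\[
|u^*(x,y)|\le \mathcal{M}u(x)\qquad\text{uniformly in } y>0 .
\]

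\emph{Second bound.} By H\"older's inequality, $|u^*(x,y)|\le \|P_y\|_{L^{p'}}\|u\|_{L^p}$. A scaling computation gives $\|P_y\|_{L^{p'}}=y^{-3/p}\|P_1\|_{L^{p'}}$, and $\|P_1\|_{L^{p'}}<\infty$ because $P_1$ is bounded and decays like $|x|^{-3-2\alpha}$. Writing $K:=C\|u\|_{L^p}$, we obtain $|u^*(x,y)|\le K y^{-3/p}$.

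Set $r=\frac{(5-2\alpha)p}{3}$. Fix $x$ and integrate in $y$ using
\[
|u^*(x,y)|\le \min\{\mathcal{M}u(x),\,Ky^{-3/p}\}.
\]
Split the $y$-integral at the crossover height $y_0=(K/\mathcal{M}u(x))^{p/3}$. On $(0,y_0)$, since $1-2\alpha>-1$, we get
\[
\int_0^{y_0}y^{1-2\alpha}(\mathcal{M}u)^r\,dy\simeq (\mathcal{M}u)^r y_0^{2-2\alpha}.
\]
On $(y_0,\infty)$ the integrand is $K^r y^{1-2\alpha-3r/p}=K^ry^{1-2\alpha-(5-2\alpha)}$. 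The exponent is $-4<-1$, so the tail converges and contributes $K^r y_0^{-2-\dots}$ of the same size as the first piece, namely $K^r y_0^{2-2\alpha-3r/p}\simeq(\mathcal{M}u)^r y_0^{2-2\alpha}$.

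Inserting $y_0$ and using $r-\frac{p(2-2\alpha)}{3}=p$, both pieces equal
\[
C\,K^{\frac{p(2-2\alpha)}{3}}\,(\mathcal{M}u(x))^{p}.
\]
Integrating in $x$ and applying the maximal inequality $\|\mathcal{M}u\|_{L^p}\le C\|u\|_{L^p}$, valid since $p>1$ (in this paper $p\ge2$), gives
\[
\iint_{\mathbb{R}^4_+}y^{1-2\alpha}|u^*|^r\,dx\,dy\le C\|u\|_{L^p}^{\frac{p(2-2\alpha)}{3}+p}=C\|u\|_{L^p}^{r}.
\]
Taking the $r$-th root yields \eqref{2.2}.

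The only delicate step is the exponent bookkeeping. Interpolating $L^p$ and $L^\infty$ in $x$ for each fixed $y$ and then integrating in $y$ fails, because the resulting pure power of $y$ diverges at either $0$ or $\infty$. One must instead take the minimum of the two bounds pointwise in $x$ and split at the $x$-dependent height $y_0$. This is exactly what makes the total homogeneity collapse to $(\mathcal{M}u)^p$. Convergence at both ends uses $\alpha<1$ (near $0$) and $3r/p=5-2\alpha>2-2\alpha$ (at infinity).

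A separate point, not needed here, concerns $p=1$. There the maximal inequality is only weak type, and one would replace it by the bound $\|u^*(\cdot,y)\|_{L^p}\le\|u\|_{L^p}$ combined with a layer-cake argument. For the range $2\le p\le\frac92$ used in Theorem \ref{th1.1} the argument above suffices.
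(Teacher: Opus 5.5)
The paper gives no proof of this lemma; it imports it verbatim as Lemma 2.2 of \cite{WX2018}. Your argument is a correct, self-contained proof for $1<p<\infty$. It is the classical argument for Poisson integrals: the domination $|u^*|\le\mathcal{M}u$, the kernel bound $|u^*|\le C\|u\|_{L^p}y^{-3/p}$, and a split of the $y$-integral at the $x$-dependent crossover height. You have transplanted it to the measure $y^{1-2\alpha}\,dx\,dy$, whose homogeneous dimension $5-2\alpha$ is precisely what forces $r=\frac{(5-2\alpha)p}{3}$.

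Compared with the bare citation, your route makes explicit two hypotheses the paper's statement suppresses. First, $u^*$ is the Poisson integral $P_y*u$, which is what \eqref{Eq2.1} produces for $u\in\dot H^\alpha$. Second, $p>1$ is needed for the strong maximal inequality. There is one slip: the tail integral equals $\frac13K^ry_0^{-3}$, not $K^ry_0^{-2-\dots}$. The identity you actually use, $K^ry_0^{-3}=(\mathcal{M}u(x))^ry_0^{2-2\alpha}$, is correct.

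Your closing remark on $p=1$ is wrong, however. No layer-cake argument can recover that case, because the inequality fails there. Take $0\le u\in L^1(\mathbb{R}^3)$, $u\not\equiv0$, supported in $B_1$. For $y\ge1$, $|x|\le y$ and $|z|\le1$ we have $|x-z|^2+y^2\le5y^2$, so $u^*(x,y)\ge c\,y^{-3}\|u\|_{L^1}$. With $r=\frac{5-2\alpha}{3}$ this gives
\[
\iint_{\mathbb{R}^4_+}y^{1-2\alpha}|u^*|^r\,dx\,dy\ \ge\ c\int_1^\infty y^{1-2\alpha}\,y^{-(5-2\alpha)}\,y^{3}\,dy=c\int_1^\infty\frac{dy}{y}=\infty .
\]
So at $p=1$ only a weak-type estimate survives, and the lemma should be read with $1<p<\infty$. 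That is exactly the range your proof covers. It also contains the only instance the paper uses, $p=\frac{6}{3-2\alpha}$ in \eqref{2.3}.
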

In view of the Sobolev embedding $\dot{H}^\alpha(\mathbb{R}^3)\hookrightarrow L^{\frac{6}{3-2\alpha}}(\mathbb{R}^3)$, if we let $p=\frac{6}{3-2\alpha}$, we can obtain
  \begin{equation}\label{2.3}
      \left(\int\int_{R^4_+}y^{1-2\alpha}|u^*|^{\frac{2(5-2\alpha)}{3-2\alpha}} \text{d}x\text{d}y\right)^{\frac{3-2\alpha}{2(5-2\alpha)}}\leq C\|u\|_{\dot{H}^\alpha(\mathbb{R}^3)}. \\
  \end{equation}

\begin{lemma}[Gagliardo-Nirenberg inequality,  ~Lemma 2.2, ~\cite{Wu24} or \cite{Nirenberg11}]\label{le2.4}
		Let $1\leq q,s<\infty$ and $m\leq k$.
		Suppose that $j$ and $\vartheta$ satisfy $m\leq j\leq k$, $0\leq \vartheta\leq 1$ and define $p\in [1,+\infty]$ by
		
		\begin{equation*}
			j- \frac{3}{p}=\left(m-\frac{3}{s}\right)\vartheta+\left(k-\frac{3}{q}\right)(1-\vartheta).
		\end{equation*}
		Then the inequality holds:
		\begin{equation*}
			\|\nabla^{j} u\|_{L^{p}}\leq C	\|\nabla^{m}u\|_{L^{s}}^{\vartheta}\|\nabla^{k} u\|_{L^{q}}^{1-\vartheta}, ~ u\in W^{m,s}(\mathbb{R}^{3})\cap W^{k,q}(\mathbb{R}^{3}),
		\end{equation*}
		where constant $C\geq 0$. Here, when $p=\infty$, we require that $0<\vartheta<1$.
\end{lemma}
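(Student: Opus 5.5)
The plan is to prove the lemma by a Littlewood--Paley argument, splitting $u$ into low and high frequencies at a threshold $N_0$ chosen afterwards. Since the lemma is quoted from \cite{Wu24,Nirenberg11}, I would only verify the mechanism and then defer the exceptional configurations to Nirenberg's original argument.

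\textbf{Step 1 (scaling and the exponent gap).} Replacing $u(x)$ by $u(\lambda x)$ scales the three norms by $\lambda^{j-3/p}$, $\lambda^{m-3/s}$ and $\lambda^{k-3/q}$. The balance relation in the statement is exactly what makes the inequality dilation invariant, so the constant can be independent of $u$. Set $D:=(k-\tfrac3q)-(m-\tfrac3s)$. Rearranging the balance relation gives
\begin{equation*}
a:=j-m+\tfrac3s-\tfrac3p=(1-\vartheta)D,\qquad b:=j-k+\tfrac3q-\tfrac3p=-\vartheta D .
\end{equation*}
The numbers $a$ and $b$ are the exponents that will appear on the two frequency pieces. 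In the nondegenerate case $D>0$ and $0<\vartheta<1$ we have $a>0>b$, and this sign pattern is what makes both geometric sums below converge.

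\textbf{Step 2 (dyadic estimate).} Write $u=\sum_N P_N u$ with $N$ ranging over dyadic numbers, so that $\|\nabla^j u\|_{L^p}\le\sum_N\|\nabla^j P_Nu\|_{L^p}$. When $p\ge s$ and $p\ge q$, Bernstein's inequality gives
\begin{equation*}
\|\nabla^j P_Nu\|_{L^p}\le CN^{a}\|\nabla^mu\|_{L^s},\qquad \|\nabla^j P_Nu\|_{L^p}\le CN^{b}\|\nabla^ku\|_{L^q}.
\end{equation*}
I use the first bound for $N\le N_0$ and the second for $N>N_0$. Because $a>0>b$, the two dyadic sums are controlled by their terms at $N=N_0$, which yields
\begin{equation*}
\|\nabla^j u\|_{L^p}\le C\big(N_0^{a}\|\nabla^mu\|_{L^s}+N_0^{b}\|\nabla^ku\|_{L^q}\big).
\end{equation*}
Choosing $N_0^{D}=\|\nabla^ku\|_{L^q}/\|\nabla^mu\|_{L^s}$ equalizes the two terms. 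With this choice the right-hand side becomes $C\|\nabla^mu\|_{L^s}^{\vartheta}\|\nabla^ku\|_{L^q}^{1-\vartheta}$, which is the claim; it is also where the condition $0<\vartheta<1$ for $p=\infty$ enters.

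\textbf{Step 3 (endpoints $\vartheta\in\{0,1\}$).} These cases reduce to the Sobolev embedding $\dot W^{m,s}\hookrightarrow\dot W^{j,p}$ (respectively $\dot W^{k,q}\hookrightarrow\dot W^{j,p}$), or to the identity when $j=m$ and $p=s$.

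\textbf{Main obstacle.} The hardest part is the range where $p<\max(s,q)$, where Bernstein's inequality cannot lower the integrability exponent, together with the endpoints $s=1$ or $p=1$, where Littlewood--Paley theory degenerates. There I would follow Nirenberg's proof. First, the one-dimensional inequality $\|u'\|_{L^p}\lesssim\|u\|_{L^s}^{1/2}\|u''\|_{L^q}^{1/2}$ is obtained by integrating by parts on intervals of adapted length and applying H\"older's inequality. Second, one iterates in each coordinate direction, and induction on $k-m$ then gives the general case.

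I would also flag that the statement as worded omits Nirenberg's standard restriction $\tfrac{j-m}{k-m}\le 1-\vartheta$ (when $m<k$), as well as the exceptional case in which $k-j-\tfrac3q$ is a nonnegative integer. A complete proof must impose these conditions, so I would state them explicitly before invoking \cite{Nirenberg11}.
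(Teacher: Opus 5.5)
The paper gives no proof of this lemma; it is quoted from the cited references. Your proposal is therefore a genuinely different and more self-contained treatment.

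\textbf{Comparison.} Your Littlewood--Paley argument is correct on the range $p\ge\max(s,q)$, $0<\vartheta<1$. There $a=(1-\vartheta)D>0>b=-\vartheta D$, except in the trivial case $j=m=k$, $p=s=q$. Both dyadic sums are then geometric, and your choice of $N_0$ produces the product. On this range the balance relation itself gives $j-m\le(1-\vartheta)(k-m)$, because $\frac3p\le\vartheta\frac3s+(1-\vartheta)\frac3q$. So the Bernstein argument proves a true statement with no extra hypotheses. Everything delicate lies in the range $p<\max(s,q)$, which you, like the paper, hand to Nirenberg. In that sketch, iterating the one-dimensional estimate over coordinates only yields the extreme value $1-\vartheta=\frac{j-m}{k-m}$, where the exponent relation becomes dimension-free. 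The remaining $\vartheta$ need a further interpolation with the Sobolev case.

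What your route buys is transparency and flexibility. Bernstein's inequality works equally well for $\Lambda^\alpha$, so your argument covers verbatim the only instance the paper actually uses: $\|f\|_{L^3}\le C\|f\|_{L^2}^{1-\frac{1}{2\alpha}}\|\Lambda^\alpha f\|_{L^2}^{\frac{1}{2\alpha}}$ in the case $2\le p\le 3$. There $p=3\ge s=q=2$, $a=\frac12$ and $b=\frac12-\alpha<0$ for $\alpha>\frac12$, while $\alpha=\frac12$ is the embedding $\dot H^{1/2}\hookrightarrow L^3$. The integer-order lemma as quoted does not literally cover this fractional case. What the citation buys is the full range of exponents.

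\textbf{Your caveats.} Your first caveat is correct and essential: without $\frac{j-m}{k-m}\le 1-\vartheta$ the lemma as stated is false. Take $m=0$, $j=1$, $k=2$, $s=q=2$, $\vartheta=1$. The relation gives $p=\frac65$, and the claim $\|\nabla u\|_{L^{6/5}}\le C\|u\|_{L^2}$ fails for $u_N(x)=\phi(x)\sin(Nx_1)$ with $\phi\in C_c^\infty(\mathbb{R}^3)$. In general, testing with $u_N$ forces $j-m\le(1-\vartheta)(k-m)$.

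Your second caveat is not needed here. Nirenberg's exceptional case ($1<q<\infty$, $k-j-\frac3q\in\mathbb{Z}_{\ge 0}$) excludes only $\vartheta=0$. At $\vartheta=0$ the relation reads $k-j-\frac3q=-\frac3p$, which for $p\in[1,\infty]$ is a nonnegative integer only if $p=\infty$. The statement already rules that configuration out.

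\textbf{Two smaller corrections.} First, in Step 3 there is no embedding $\dot W^{m,s}\hookrightarrow \dot W^{j,p}$ with $j>m$; that is precisely the counterexample above. Under your added restriction, $\vartheta=1$ forces $j=m$ and $p=s$, so only the identity remains. Second, Step 2 uses only Bernstein's inequality (Young's inequality with frequency-localized kernels) and the triangle inequality, both valid on $L^1$ and $L^\infty$. So $s$, $q$ or $p$ equal to $1$ are not obstacles there; the only real limitation of your method is $p<\max(s,q)$.
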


\begin{lemma}[\cite{VA2019}, Fractional Leibniz rule]\label{le2.2}
Given that $f$ and $g$ are two smooth functions, then we have the following estimate:
  $$\|(-\triangle)^s(fg)\|_{L^p}\leq C\|(-\triangle)^sf\|_{L^{p_0}}\|g\|_{L^{p_1}}+C\|f\|_{L^{q_0}}\|(-\triangle)^s g\|_{L^{q_1}},$$
  provided $\frac{1}{p}=\frac{1}{p_0}+\frac{1}{p_1}=\frac{1}{q_0}+\frac{1}{q_1}$, with $s>0$, $1 < p < +\infty$, and $1 < p_0, p_1, q_0, q_1 \leq
+\infty$.
\end{lemma}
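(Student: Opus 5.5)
We would prove Lemma \ref{le2.2} by the classical Kato--Ponce / Coifman--Meyer strategy: a Littlewood--Paley paraproduct decomposition of $fg$, followed by square-function estimates in $L^p$. Fix a dyadic partition of unity $1=\sum_{j\in\mathbb{Z}}\varphi(2^{-j}\xi)$ away from the origin, and let $\Delta_j$ be the associated frequency projections and $S_j=\sum_{k\le j-3}\Delta_k$. Write Bony's decomposition
\[
fg=\sum_j S_j f\,\Delta_j g+\sum_j \Delta_j f\,S_j g+\sum_{|j-k|\le 2}\Delta_j f\,\Delta_k g=:\Pi_1+\Pi_2+\Pi_3 .
\]
Throughout we use $\Lambda^{2s}=(-\triangle)^s$ with symbol $|\xi|^{2s}$. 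We will show that $\Pi_1$ is controlled by $\|f\|_{L^{q_0}}\|(-\triangle)^s g\|_{L^{q_1}}$, that $\Pi_2$ is controlled by $\|(-\triangle)^sf\|_{L^{p_0}}\|g\|_{L^{p_1}}$, and that $\Pi_3$ is controlled by either product.

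\textbf{Step 1: the low--high term $\Pi_1$.} Each summand $S_jf\,\Delta_jg$ has Fourier support in an annulus $|\xi|\sim 2^j$. Hence $(-\triangle)^s$ acts on it essentially as multiplication by $2^{2sj}$. More precisely, $(-\triangle)^s(S_jf\,\Delta_jg)=\tilde\Delta_j\big(S_jf\,2^{2sj}\Delta_jg\big)$ up to a harmless multiplier, and $2^{2sj}\Delta_j g=\tilde\Delta_j(-\triangle)^s g$ with $\tilde\Delta_j$ a modified projection. We then combine three ingredients: the Littlewood--Paley square-function characterisation of $L^p$ for $1<p<\infty$, the pointwise bound $\sup_j|S_jf|\le C\,Mf$ by the Hardy--Littlewood maximal function, and H\"older's inequality. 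These give
\[
\|(-\triangle)^s\Pi_1\|_{L^p}\lesssim \Big\|Mf\Big(\sum_j|\tilde\Delta_j(-\triangle)^s g|^2\Big)^{1/2}\Big\|_{L^p}\lesssim \|f\|_{L^{q_0}}\|(-\triangle)^s g\|_{L^{q_1}} .
\]
The endpoint $q_0=\infty$ is allowed, since $M$ is bounded on $L^\infty$; $q_1=\infty$ is handled by swapping roles or by the Coifman--Meyer theorem for bilinear multipliers. The term $\Pi_2$ is treated symmetrically and gives $\|(-\triangle)^sf\|_{L^{p_0}}\|g\|_{L^{p_1}}$.

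\textbf{Step 2: the high--high term $\Pi_3$, the main obstacle.} Here $\Delta_jf\,\Delta_kg$ with $|j-k|\le2$ has Fourier support only in a ball $|\xi|\lesssim 2^j$, not in an annulus. So $(-\triangle)^s$ cannot simply be replaced by $2^{2sj}$, and low output frequencies must be summed. We would write
\[
\Delta_\ell(-\triangle)^s\Pi_3=\sum_{j\ge \ell-4}2^{2s\ell}\,\Delta_\ell\big(\Delta_jf\,\Delta_{k}g\big)
\]
and put the derivative on $g$ via $2^{2s\ell}=2^{2s(\ell-j)}2^{2sj}$. The geometric factor $2^{2s(\ell-j)}$ with $s>0$ makes the sum over $j\ge\ell-4$ convergent (this is exactly where $s>0$ is used). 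The result is a bound by $\|(\sum_j|\Delta_jf|^2)^{1/2}(\sum_j|2^{2sj}\Delta_jg|^2)^{1/2}\|_{L^p}$, or alternatively by $\|\sup_j|\Delta_jf|\cdot(\cdots)^{1/2}\|_{L^p}$, followed by H\"older and the square-function and maximal estimates. If the direct square-function argument becomes awkward at the endpoints $q_0=\infty$ or $p_1=\infty$, we would instead realise each paraproduct as a bilinear Coifman--Meyer multiplier with symbol smooth away from the origin, and invoke its $L^{q_0}\times L^{q_1}\to L^p$ boundedness for $1<p<\infty$.

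Finally, summing the three contributions gives the stated inequality under $\frac1p=\frac1{p_0}+\frac1{p_1}=\frac1{q_0}+\frac1{q_1}$. We expect Step 2, summing over output frequencies below the interacting ones while keeping the endpoint exponents equal to $\infty$, to be the delicate point. This is why the hypothesis $1<p<\infty$ is essential.
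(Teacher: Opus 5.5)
The paper gives no proof of this lemma; it quotes it as a black box from the cited reference. Your outline is the standard Bony-paraproduct proof of the homogeneous Kato--Ponce inequality from that literature, and it is correct in substance. Compared with the bare citation, it shows where each hypothesis is used: $s>0$ makes the high--high sum $\sum_{k\ge -4}2^{-2sk}$ converge, and $1<p<\infty$ gives the Littlewood--Paley characterisation of the output norm.

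Three points need tightening.

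(i) At the endpoint $q_1=\infty$ in $\Pi_1$ (and symmetrically $p_0=\infty$ in $\Pi_2$), ``swapping roles'' does not work. You cannot move the derivative onto the low-frequency factor, because $|\xi+\eta|^{2s}|\xi|^{-2s}$ is unbounded where $|\xi|\ll|\eta|$. Nor can you put the square function on $S_jf$, since $\sum_j|S_jf|^2$ diverges wherever $f\neq0$. Here you genuinely need the $L^p\times L^\infty\to L^p$ (BMO-paraproduct) case of the Coifman--Meyer theorem. Apply it to the pair $(f,(-\triangle)^s g)$ with symbol $|\xi+\eta|^{2s}|\eta|^{-2s}\sum_j\psi(2^{-j}\xi)\varphi(2^{-j}\eta)$, where $\psi(2^{-j}\cdot)$ is the symbol of $S_j$. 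This symbol satisfies the Coifman--Meyer bounds because $|\xi+\eta|\ge|\eta|/2$ on its support.

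(ii) In Step 2, passing from the output square function to $\|(\sum_j|\Delta_jf\,2^{2sj}\tilde\Delta_jg|^2)^{1/2}\|_{L^p}$ is not just H\"older. Reindex by the gap $k=j-\ell\ge-4$ to write $(-\triangle)^s\Pi_3=\sum_{k\ge -4}2^{-2sk}\sum_j\Delta^{(m)}_{j-k}F_j$. Here $F_j=\Delta_jf\,2^{2sj}\tilde\Delta_jg$, and $\Delta^{(m)}_\ell$ has symbol $|2^{-\ell}\xi|^{2s}\varphi(2^{-\ell}\xi)$. For each fixed $k$, bound $\|\sum_j\Delta^{(m)}_{j-k}F_j\|_{L^p}$ by $\|(\sum_j|F_j|^2)^{1/2}\|_{L^p}$, uniformly in $k$. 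Do this by duality against the square function of a test function in $L^{p'}$, or use the Fefferman--Stein inequality. After that, choosing which factor carries the supremum handles every endpoint combination in $\Pi_3$, as you indicate.

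(iii) ``Smooth'' must be read as Schwartz, or at least as functions recovered from their homogeneous Littlewood--Paley pieces. For $f\equiv 1$ every $\Delta_jf$ vanishes, so with your definition of $S_j$ one gets $\Pi_1+\Pi_2+\Pi_3=0\neq fg$.
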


\begin{lemma}\label{le2.6}
Let $\rho \in L^\infty(\mathbb{R}^3)$. There exists a constant $C > 0$ such that
$$\|\rho \ln\rho\|_{L^\infty(\mathbb{R}^3)} \le C \|\rho\|_{L^\infty(\mathbb{R}^3)}^2 + C \|\rho\|_{L^\infty(\mathbb{R}^3)}^{1/2}.$$
\end{lemma}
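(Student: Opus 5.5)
The plan is to reduce the lemma to an elementary pointwise inequality for the scalar function $t\mapsto t\ln t$ on $[0,M]$, where $M:=\|\rho\|_{L^\infty(\mathbb{R}^3)}$. Two conventions are needed. First, $\rho$ is a density, so $\rho\ge0$ a.e. (otherwise $\ln\rho$ is undefined); I would state this explicitly. Second, $0\ln 0:=0$, which is consistent since $t\ln t\to0$ as $t\to0^+$. The goal is then to show that for every $t\in[0,M]$,
\[
|t\ln t|\le C\,t^2+C\,t^{1/2},
\]
with $C$ absolute. Applying this with $t=\rho(x)$ for a.e. $x$ and using $t\le M$ gives $|\rho\ln\rho|(x)\le CM^2+CM^{1/2}$, and taking the essential supremum gives the lemma.

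For the pointwise bound I will split into two ranges of $t$.

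\textbf{Case $t\ge1$.} The inequality $\ln t\le t-1<t$ gives $0\le t\ln t\le t^2$.

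\textbf{Case $0<t<1$.} Write $|t\ln t|=t^{1/2}\cdot\big(t^{1/2}|\ln t|\big)$. The function $g(t)=-t^{1/2}\ln t$ on $(0,1)$ is bounded: it tends to $0$ at both endpoints, and calculus gives its maximum $2/e$ at $t=e^{-2}$. Hence $|t\ln t|\le \frac{2}{e}\,t^{1/2}$.

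\textbf{Case $t=0$.} The bound holds trivially.

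Combining the cases gives the pointwise inequality with $C=1$. By monotonicity of $t^2$ and $t^{1/2}$ on $[0,M]$, we get $t^2\le M^2$ and $t^{1/2}\le M^{1/2}$, which yields the claim.

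There is no real obstacle here. The only points needing care are the sign assumption $\rho\ge0$ and the convention at $\rho=0$, both of which I would state explicitly. The choice of the exponent $1/2$ is not essential: any $t^{\theta}$ with $0<\theta<1$ would control $t|\ln t|$ near $0$. I would keep $1/2$ to match the form of the statement as it is later used in Section 3.
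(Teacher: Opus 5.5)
Your proof is correct and follows the paper's argument. Both split at $t=1$, use $|t\ln t|\le Ct^2$ for $t\ge1$, use the maximum $2/e$ of $t^{1/2}|\ln t|$ at $t=e^{-2}$ on $(0,1]$, and then substitute $t=\rho(x)$ and take the essential supremum. Your explicit treatment of $\rho\ge0$, the convention $0\ln0=0$, and the monotonicity step $t\le M\Rightarrow t^2\le M^2,\ t^{1/2}\le M^{1/2}$ supplies details the paper leaves implicit.
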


\begin{proof}
Consider $f(x) = x\ln x$ for $x>0$. One checks that
\[
|f(x)| \le
\begin{cases}
C_0 x^2, & x > 1,\\
C_0 x^{1/2}, & 0 < x \le 1,
\end{cases}
\]
holds with e.g. $C_0 = 2/e$ (since $\max_{x>0} |x\ln x|/x^2 = 1/e$ and $\max_{0<x\le1} |x\ln x|/x^{1/2} = 2/e$). Substituting $x = \rho(y)$ and taking the essential supremum over $y\in\mathbb{R}^3$ gives
\[
\|\rho\ln\rho\|_{L^\infty} \le C_0 \|\rho\|_{L^\infty}^2 + C_0 \|\rho\|_{L^\infty}^{1/2},
\]
which proves the lemma with $C = C_0$.
\end{proof}

\section{The proof of Theorem \ref{th1.1}}

First, multiplying $\eqref{eq1.1}_1$ by $u\psi_R(x)$ and $\eqref{eq1.1}_2$ by $b\psi_R(x)$,
 \begin{eqnarray*}
   \langle \frac{1}{2}\nabla|b|^2\,\text{,}\,
   u\psi_{R}\rangle &=& \langle (-\triangle)^{\alpha}u\,\text{,}\,
   u\psi_{R}\rangle +\langle \text{div}(\rho u\otimes u)\,\text{,}\,u\psi_{R}\rangle-\langle(b \cdot \nabla)b \,\text{,}\,u\psi_{R}\rangle+\langle\nabla p\,\text{,}\,u\psi_{R} \rangle\\
   & &+\langle (-\triangle)^{\beta}u\,\text{,}\,b\psi_{R}\rangle+\langle(u \cdot \nabla)b\,\text{,}\,b\psi_{R}\rangle-\langle(b \cdot \nabla)u \,\text{,}\,b\psi_{R}\rangle.
 \end{eqnarray*}
  Integrating by parts over $\mathbb{R}^{3}$ and taking into account $\eqref{eq1.1}_3$ and $\eqref{eq1.1}_4$, we have
\begin{enumerate}[label=(\Roman*)]
  \item
$\langle \text{div}(\rho u\otimes u)\,\text{,}\,u\psi_{R}\rangle = \int_{\mathbb{R}^{3}} \rho u_i \partial_i u_j u_j\psi_R \, \text{d}x = \frac{1}{2} \int_{\mathbb{R}^{3}} \rho u_i \partial_i u_j^2 \psi_R \, \text{d}x = - \int_{\mathbb{R}^{3}} (u \cdot \nabla \psi_R) \rho\frac{|u|^2}{2} \, \text{d}x$
  \item\label{2}
$\langle(b \cdot \nabla) b\,\text{,}\,\psi_R u \, \rangle= \int_{\mathbb{R}^{3}} b_i \partial_i b_j \psi_R u_j \, \text{d}x ,$
\item
$\langle\nabla p\,\text{,}\,\psi_Ru\rangle~~\text{remains unchanged}$
\item \label{4}
$\langle(u\cdot\nabla)b\,\text{,}\,\psi_Rb\rangle=\int_{\mathbb{R}^{3}}u_i\partial_ib_j\cdot\psi_Rb_j~\text{d}x=\frac{1}{2}\int_{\mathbb{R}^{3}}u_i\partial_ib_j^2\psi_R~\text{d}x=-\frac{1}{2}\int_{\mathbb{R}^{3}}\partial_iu_ib_j^2\psi_R~\text{d}x\\
-\frac{1}{2}\int_{\mathbb{R}^{3}}u_ib_j^2\partial_i\psi_R~\text{d}x
=-\int_{\mathbb{R}^{3}}(\nabla \cdot u)\frac{|b|^2}{2}\psi_R~\text{d}x-\int_{\mathbb{R}^{3}}(u\cdot\nabla\psi_R)\frac{|b|^2}{2}~\text{d}x,$
\item \label{5}
$\langle(b\cdot\nabla)u\,\text{,}\,\psi_Rb\rangle=\int_{\mathbb{R}^{3}}b_i\partial_iu_j\cdot\psi_Rb_j~\text{d}x=-\int_{\mathbb{R}^{3}}b_iu_j\partial_i(\psi_Rb_j)~\text{d}x=-\int_{\mathbb{R}^{3}}b_iu_j\partial_i\psi_Rb_j~\text{d}x\\
-\int_{\mathbb{R}^{3}}b_iu_j\psi_R\partial_ib_j~\text{d}x.$
\item \label{6}
$\langle \frac{1}{2}\nabla|b|^2\,\text{,}\, u\psi_{R}\rangle=-\int_{\mathbb{R}^{3}}\frac{|b|^2}{2}(\nabla\cdot u)\psi_R~\text{d}x-\int_{\mathbb{R}^{3}}\frac{|b|^2}{2}u\cdot\nabla\psi_R$
\end{enumerate}
We notice that there are the same items in \ref{2} and \ref{5}, so we add the two equations together to obtain
$$\ref{2}+ \ref{5}=-\int_{\mathbb{R}^{3}}b_iu_j\partial_i\psi_Rb_j~\text{d}x=-\int_{\mathbb{R}^{3}}(b\cdot\nabla\psi_R)(b\cdot u).$$
Similarly, \ref{4} and \ref{6} have the same item, we can have
$$\ref{4}- \ref{6}=\int_{\mathbb{R}^{3}}\frac{|b|^2}{2}u\cdot\nabla\psi_R-\int_{\mathbb{R}^{3}}(u\cdot\nabla\psi_R)\frac{|b|^2}{2}~\text{d}x.$$
Combining the above two equations yields
\begin{align}\label{eq3.1}
    &\langle(-\triangle)^{\alpha}u\,\text{,}\,\psi_Ru\rangle+\langle(-\triangle)^{\beta}b\,\text{,}\,\psi_Rb\rangle\nonumber\\
    =&\int_{\mathbb{R}^{3}}(u\cdot\nabla\psi_R)\left(\frac{\rho|u|^2}{2}+\frac{|b|^2}{2}\right) \nonumber- \int_{\mathbb{R}^{3}}(b\cdot\nabla\psi_R)(b\cdot u)~\text{d}x\nonumber\\
    &+\int_{\mathbb{R}^{3}}(\nabla \cdot u)\frac{|b|^2}{2}\psi_R~\text{d}x+\int_{\mathbb{R}^{3}}\psi_R u\cdot \nabla p\text{d}x-\int_{\mathbb{R}^{3}}\frac{|b|^2}{2}u\cdot\nabla\psi_R.
\end{align}

In order to estimate $(-\triangle)^\alpha u$ and $(-\triangle)^\beta b$, we multiply \eqref{Eq2.1}$_{1}$ by $C_\alpha y^{\lambda_\alpha}u^*\psi_R(x)\chi_R(y)$ and then integrate to obtain
  \begin{align}\label{eq3.2}
    0=&C_{\alpha}\int_{\mathbb{R}^{4}_+}\overline{\text{div}}(y^{\lambda_\alpha}\bar{\nabla}u^*)\cdot u^*(x,y)~\psi_R(x)\chi_R(y) \text{d}x\text{d}y\nonumber\\
    =&-C_{\alpha}\int_{\mathbb{R}^{3}}\lim\limits_{y\rightarrow 0}~y^{\lambda_\alpha}\partial_{y}u^*\cdot u^*(x,y)\psi_R(x)\chi_R(y) \text{d}x\nonumber\\
    &-C_{\alpha}\int_{\mathbb{R}^{4}_+}y^{\lambda_\alpha}|\bar{\nabla}u^*|^2~\psi_R(x)\chi_R(y)~ \text{d}x\text{d}y - C_{\alpha}\int_{\mathbb{R}^{4}_+}y^{\lambda_\alpha}\bar{\nabla}u^*\cdot u^*\bar{\nabla}(\psi_R(x)\chi_R(y)) \text{d}x\text{d}y\nonumber\\
    =&\langle(-\triangle)^\alpha u\,\text{,}\,\psi_Ru\rangle-C_{\alpha}\int_{\mathbb{R}^{4}_+}y^{\lambda_\alpha}|\bar{\nabla}u^*|^2~\psi_R(x)\chi_R(y) \text{d}x\text{d}y\nonumber\\
& + C_{\alpha}\int_{\mathbb{R}^{4}_+}\frac{|u^*|^2}{2}\bar{\nabla}(y^{\lambda_\alpha}\bar{\nabla}(\psi_R(x)\chi_R(y))) \text{d}x\text{d}y.
\end{align}
Similar to Eq. \eqref{eq3.2}, we have
  \begin{align}\label{eq3.3}
    0=&C_{\beta}\int_{\mathbb{R}^{4}_+}\overline{\text{div}}(y^{\lambda_\beta}\bar{\nabla}b^*)\cdot b^*(x,y)~\psi_R(x)\chi_R(y) \text{d}x\text{d}y\nonumber\\
    =&-C_{\beta}\int_{\mathbb{R}^{3}}\lim\limits_{y\rightarrow 0}~y^{\lambda_\beta}\partial_{y}b^*\cdot b^*(x,y)\psi_R(x)\chi_R(y) \text{d}x\nonumber\\
    &-C_{\beta}\int_{\mathbb{R}^{4}_+}y^{\lambda_\beta}|\bar{\nabla}b^*|^2~\psi_R(x)\chi_R(y) \text{d}x\text{d}y - C_{\beta}\int_{\mathbb{R}^{4}_+}y^{\lambda_\beta}\bar{\nabla}b^*\cdot b^*\bar{\nabla}(\psi_R(x)\chi_R(y)) \text{d}x\text{d}y\nonumber\\
    =&\langle(-\triangle)^\beta b\,\text{,}\,\psi_Rb\rangle- C_{\beta}\int_{\mathbb{R}^{4}_+}y^{\lambda_\beta}|\bar{\nabla}b^*|^2~\psi_R(x)\chi_R(y) \text{d}x\text{d}y\nonumber\\
& + C_{\beta}\int_{\mathbb{R}^{4}_+}\frac{|b^*|^2}{2}\bar{\nabla}(y^{\lambda_\beta}\bar{\nabla}(\psi_R(x)\chi_R(y))) \text{d}x\text{d}y.
\end{align}
Combining the above two equations \eqref{eq3.2} - \eqref{eq3.3}, we can obtain that
  \begin{align*}
0=&\langle(-\triangle)^\alpha u\,\text{,}\,\psi_Ru\rangle+\langle(-\triangle)^\beta b\,\text{,}\,\psi_Rb\rangle\\
&-\bm{C}_{\alpha}\int_{\mathbb{R}^{4}_+}y^{\lambda_\alpha}|\bar{\nabla}u^*|^2~\psi_R(x)\chi_R(y) \text{d}x\text{d}y-\bm{C}_{\beta}\int_{\mathbb{R}^{4}_+}y^{\lambda_\beta}|\bar{\nabla}b^*|^2~\psi_R(x)\chi_R(y) \text{d}x\text{d}y\\
&+ \bm{C}_{\alpha}\int_{\mathbb{R}^{4}_+}\frac{|u^*|^2}{2}\bar{\nabla}(y^{\lambda_\alpha}\bar{\nabla}(\psi_R(x)\chi_R(y))) \text{d}x\text{d}y+\bm{C}_{\beta}\int_{\mathbb{R}^{4}_+}\frac{|b^*|^2}{2}\bar{\nabla}(y^{\lambda_\beta}\bar{\nabla}(\psi_R(x)\chi_R(y))) \text{d}x\text{d}y.
\end{align*}
Since by substituting the equation above into Eq. \eqref{eq3.1}, we have
  \begin{align}\label{eq3.4}
&2\bm{C}_{\alpha}\int_{\mathbb{R}^{4}_+}y^{\lambda_\alpha}|\bar{\nabla}u^*|^2~\psi_R(x)\chi_R(y)~\text{d}x\text{d}y + 2\bm{C}_{\beta}y^{\lambda_\beta}|\bar{\nabla}b^*|^2~\psi_R(x)\chi_R(y)~\text{d}x\text{d}y\nonumber\\
\leq&\int_{\mathbb{R}^{3}}(u\cdot\nabla\psi_R)\rho|u|^2+\int_{\mathbb{R}^{3}}(u\cdot\nabla\psi_R)|b|^2+ 2\int_{\mathbb{R}^{3}}(b\cdot\nabla\psi_R)(b\cdot u)~\text{d}x\nonumber\\
    &+\int_{\mathbb{R}^{3}}\frac{|b|^2}{2}u\cdot\nabla\psi_R+ \bm{C}_{\alpha}\int_{\mathbb{R}^{4}_+}|u^*|^2\bar{\nabla}(y^{\lambda_\alpha}\bar{\nabla}(\psi_R(x)\chi_R(y))) \text{d}x\text{d}y\nonumber\\
&+\bm{C}_{\beta}\int_{\mathbb{R}^{4}_+}|b^*|^2\bar{\nabla}(y^{\lambda_\beta}\bar{\nabla}(\psi_R(x)\chi_R(y))) \text{d}x\text{d}y+2\int_{\mathbb{R}^{3}}\psi_R u\cdot \nabla p\text{d}x\nonumber\\
:=& \sum_{i=1}^{7} I_{i}.
\end{align}
Next, we will divide the proof of Theorem\ref{th1.1} into two parts.

\subsection{Case $3\leq p \leq \frac{9}{2}$}\label{subsubsec3.1.1}
For the first term in Eq. \eqref{eq3.4}, we have
\begin{equation}\label{eq3.18}
 I_1=\int_{\mathbb{R}^{3}}(u\cdot\nabla\psi_{R})\rho|u|^2~\text{d}x\leq \frac{\|\rho\|_{L^\infty(\mathbb{R}^3)}}{R}\|u\|_{\mathrm{L}^3(B_{\frac{3}{2}R}\backslash B_{\frac{3}{4}R})}^3\overset{(p\geq3)}{\lesssim} R^{2-\frac{9}{p}}\|u\|_{\mathrm{L}^p(B_{2R}\backslash B_{\frac{R}{2}})}^3.
\end{equation}
Similarly, we can obtain estimates for $I_2$ and $I_3$:
\begin{align}\label{eq3.19}
    I_2=&\int_{\mathbb{R}^{3}}(u\cdot\nabla\psi_{R})|b|^2~\text{d}x\nonumber\\
    \lesssim&\frac{1}{R}\|u\|_{\mathrm{L}^3(B_{\frac{3}{2}R}\backslash B_{\frac{3}{4}R})}\||b|^2\|_{\mathrm{L}^\frac{3}{2}(B_{\frac{3}{2}R}\backslash B_{\frac{3}{4}R})}\hbox{\qquad(By H\"{o}lder's inequality)}\nonumber\\
    \lesssim&\frac{1}{R}\|u\|_{\mathrm{L}^3(B_{2R}\backslash B_{\frac{R}{2}})}\|b\|_{\mathrm{L}^3(B_{2R}\backslash B_{\frac{R}{2}})}^2\nonumber\\
\lesssim&\frac{1}{R}\|(u,b)\|_{\mathrm{L}^3(B_{2R}\backslash B_{\frac{R}{2}})}^3\nonumber\\
\overset{(p\geq3)}{\lesssim}& R^{2-\frac{9}{p}}\|(u,b)\|_{\mathrm{L}^p(B_{2R}\backslash B_{\frac{R}{2}})}^3
\end{align}
and
\begin{align}\label{eq3.20}
I_3=&2\int_{\mathbb{R}^{3}}(b\cdot u)(b\cdot \nabla\psi_R(x)) \text{d}x\lesssim \int_{\mathbb{R}^{3}}|u||b|^2|\nabla\psi_R|~\text{d}x \nonumber\\
\lesssim&\frac{1}{R}\|(u,b)\|_{\mathrm{L}^3(B_{2R}\backslash B_{\frac{R}{2}})}^3\nonumber\\
\overset{(p\geq3)}{\lesssim}& R^{2-\frac{9}{p}}\|(u,b)\|_{\mathrm{L}^p(B_{2R}\backslash B_{\frac{R}{2}})}^3.
\end{align}
Moreover, we can easily notice that $I_4$ has the same estimate as $I_2$ and $I_3$:
\begin{align}\label{eq3.6}
I_4=&\int_{\mathbb{R}^{3}}\frac{|b|^2}{2}u\cdot\nabla\psi_R~\text{d}x\lesssim\int_{\mathbb{R}^{3}}|u||b|^2|\nabla\psi_R|~\text{d}x\overset{(p\geq3)}{\lesssim} R^{2-\frac{9}{p}}\|(u,b)\|_{\mathrm{L}^p(B_{2R}\backslash B_{\frac{R}{2}})}^3.
\end{align}
For $I_5$, we notice that
\begin{align}\label{3.11}
I_5 =& - C_{\alpha}\int_{\mathbb{R}^{4}_+}y^{\lambda_\alpha}\bar{\nabla}u^*\cdot u^*\bar{\nabla}(\psi_R(x)\chi_R(y)) \text{d}x\text{d}y\nonumber\\
  \lesssim& \int_{\mathbb{R}^{4}_+}y^{\lambda_\alpha}\bar{\nabla}u^*\cdot u^*\nabla\psi_R(x)\chi_R(y) \text{d}x\text{d}y + \int_{\mathbb{R}^{4}_+}y^{\lambda_\alpha}\bar{\nabla}u^*\cdot u^*\psi_R(x)\bar{\nabla}\chi_R(y) \text{d}x\text{d}y\nonumber\\
    \lesssim&\frac{1}{R}\int_{0}^{\frac{3}{2}R}\int_{B_{\frac{3}{2}R}\backslash B_{\frac{3}{4}R}}y^{\lambda_\alpha}\bar{\nabla}u^*\cdot u^*~\text{d}x\text{d}y+ \frac{1}{R}\int_{\frac{3}{4}R}^{\frac{3}{2}R}\int_{B_{\frac{3}{2}R}}y^{\lambda_\alpha}\bar{\nabla}u^*\cdot u^*~\text{d}x\text{d}y.
 \end{align}
 Next, applying H\"{o}lder's inequality and Lemma \ref{le2.5}, we obtain
\begin{align}\label{3.10}
  I_5 \lesssim& \frac{1}{R}\left(\int_{0}^{\frac{3}{2}R}\int_{B_{\frac{3}{2}R}\backslash B_{\frac{3}{4}R}}y^{\lambda_\alpha}|\bar{\nabla}u^*|^2~\text{d}x\text{d}y\right)^\frac{1}{2}\left(\int_{0}^{\frac{3}{2}R}\int_{B_{\frac{3}{2}R}\backslash B_{\frac{3}{4}R}}y^{\lambda_\alpha}|u^*|^{\frac{2(5-2\alpha)}{3-2\alpha}}~\text{d}x\text{d}y\right)^\frac{3-2\alpha}{2(5-2\alpha)}\nonumber\\
  &\left(\int_{0}^{\frac{3}{2}R}\int_{B_{\frac{3}{2}R}\backslash B_{\frac{3}{4}R}}y^{\lambda_\alpha}~\text{d}x\text{d}y\right)^\frac{1}{5-2\alpha}\nonumber\\
  &+\frac{1}{R}\left(\int_{\frac{3}{4}R}^{\frac{3}{2}R}\int_{B_{\frac{3}{2}R}}y^{\lambda_\alpha}|\bar{\nabla}u^*|^2~\text{d}x\text{d}y\right)^\frac{1}{2}\left(\int_{\frac{3}{4}R}^{\frac{3}{2}R}\int_{B_{\frac{3}{2}R}}y^{\lambda_\alpha}|u^*|^{\frac{2(5-2\alpha)}{3-2\alpha}}~\text{d}x\text{d}y\right)^\frac{3-2\alpha}{2(5-2\alpha)}\nonumber\\
  &\left(\int_{\frac{3}{4}R}^{\frac{3}{2}R}\int_{B_{\frac{3}{2}R}}y^{\lambda_\alpha}~\text{d}x\text{d}y\right)^\frac{1}{5-2\alpha}\hbox{\qquad(By H\"{o}lder's inequality)}\nonumber\\
\lesssim &\|u\|_{\dot{H}^\alpha(\mathbb{R}^3)}\left(\int_{0}^{\frac{3}{2}R}\int_{B_{\frac{3}{2}R}\backslash B_{\frac{3}{4}R}}y^{\lambda_\alpha}|\bar{\nabla}u^*|^2~\text{d}x\text{d}y\right)^\frac{1}{2}\nonumber\\
  &+\|u\|_{\dot{H}^\alpha(\mathbb{R}^3)}\left(\int_{\frac{3}{4}R}^{\frac{3}{2}R}\int_{B_{\frac{3}{2}R}}y^{\lambda_\alpha}|\bar{\nabla}u^*|^2~\text{d}x\text{d}y\right)^\frac{1}{2}. \hbox{\qquad(By Lemma \ref{le2.5})}
\end{align}

Then we only need to repeat the procedure in \eqref{3.11}~-~\eqref{3.19} by replacing $u$ with $b$ and $\alpha$ with $\beta$, we can have the estimation of $I_6$
\begin{equation}\label{3.16}
  I_6 \lesssim \|b\|_{\dot{H}^\beta(\mathbb{R}^3)}\left(\int_{0}^{\frac{3}{2}R}\int_{B_{\frac{3}{2}R}\backslash B_{\frac{3}{4}R}}y^{\lambda_\beta}|\bar{\nabla}b^*|^2~\text{d}x\text{d}y\right)^\frac{1}{2}
  +\|b\|_{\dot{H}^\beta(\mathbb{R}^3)}\left(\int_{\frac{3}{4}R}^{\frac{3}{2}R}\int_{B_{\frac{3}{2}R}}y^{\lambda_\beta}|\bar{\nabla}b^*|^2~\text{d}x\text{d}y\right)^\frac{1}{2}.
\end{equation}

Finally, in order to estimate $I_7$, we divide the problem into two cases: $\gamma > 1$ and $\gamma  = 1$.

Case1 : $\gamma>1$. We write the pressure term in the following form:
$$\nabla p=\eta\nabla p ^\gamma=\frac{\eta\gamma}{\gamma-1}\rho\nabla\rho^{\gamma-1}.$$
Then, we have
\begin{align*}
  I_7 =& 2\int_{\mathbb{R}^{3}}\psi_R u\cdot \nabla p\text{d}x \\
 \leq& \frac{C\gamma}{\gamma-1}\int_{\mathbb{R}^{3}}\psi_R \rho u\cdot \nabla\rho^{\gamma-1} \text{d}x\\
 =&-\frac{C\gamma}{\gamma-1}\int_{\mathbb{R}^{3}}\psi_R \text{div}(\rho u)\cdot\rho^{\gamma-1} \text{d}x-\frac{C\gamma}{\gamma-1}\int_{\mathbb{R}^{3}}\rho^{\gamma}u\cdot \nabla\psi_R\text{d}x\\
 \leq&\frac{C\gamma}{\gamma-1}\int_{\mathbb{R}^{3}}\rho^{\gamma}u\cdot \nabla\psi_R. \hbox{\qquad\qquad(by~$\eqref{eq1.1}_3$)}\\
\end{align*}
Thus we can obtain
\begin{align}\label{3.17}
  |I_7| \leq &\frac{C\gamma}{\gamma-1}\int_{\mathbb{R}^{3}}\rho^{\gamma}|u| |\nabla\psi_R|\text{d}x\leq \frac{C}{R}\int_{B_{\frac{3}{2}R}\backslash B_{\frac{3}{4}R}}\rho^{\gamma}|u|\text{d}x \nonumber\\
  \leq& CR^{2-\frac{3}{q}}\|\rho\|_{L^\infty(\mathbb{R}^3)}^{\gamma}\|u\|_{L^q(B_{\frac{3}{2}R}\backslash B_{\frac{3}{4}R})}.
\end{align}

Case 2: $\gamma=1$. In this case, $P = a\rho$, hence we have
\begin{align*}
\nabla p=a\nabla\rho=a\rho\nabla\ln\rho.
\end{align*}
According to $\eqref{eq1.1}_3$, we use $\text{div}(\rho u)=0$ and apply Lemma \ref{le2.6} to obtain
\begin{align}\label{3.15}
  I_7= & a\int_{\mathbb{R}^{3}}\psi_R\rho u\cdot\nabla\ln\rho~\text{d}x \nonumber\\
  \lesssim & \int_{\mathbb{R}^{3}}\psi_R\text{div}(\rho u)\ln\rho~\text{d}x+\int_{\mathbb{R}^{3}}(\rho\ln\rho)u\cdot\nabla\psi_R~ \text{d}x \nonumber\\
\lesssim &\int_{\mathbb{R}^{3}}(\rho\ln\rho)u\cdot\nabla\psi_R~ \text{d}x\nonumber\\
\lesssim&\frac{1}{R}\|\rho\ln\rho\|_{L^\infty(\mathbb{R}^3)}\|u\|_{L^q(B_{\frac{3}{2}R}\backslash B_{\frac{3}{4}R})}\|1\|_{L^{\frac{q}{q-1}}(\mathbb{R}^3)}\hbox{\qquad(By H\"{o}lder's inequality)}\nonumber\\
\lesssim&R^{2-\frac{3}{q}}\|\rho\ln\rho\|_{L^\infty(\mathbb{R}^3)}\|u\|_{L^q(B_{\frac{3}{2}R}\backslash B_{\frac{3}{4}R})}\nonumber\\
\lesssim&R^{2-\frac{3}{q}}(\|\rho\|_{L^\infty(\mathbb{R}^3)}^2+\|\rho\|_{L^\infty(\mathbb{R}^3)}^{\frac{1}{2}})\|u\|_{L^q(B_{\frac{3}{2}R}\backslash B_{\frac{3}{4}R})}.\hbox{\qquad(By Lemma \ref{le2.6})}
\end{align}
It is easy to observe that, regardless of whether $\gamma > 1$ or $\gamma = 1$, when $2-\frac{3}{q}<0$, we can obtain $I_7\rightarrow 0$ as $R\rightarrow \infty$.

So, by summing \eqref{eq3.18}, \eqref{eq3.19}, \eqref{eq3.20}, \eqref{eq3.6}, \eqref{3.10}, \eqref{3.16} and \eqref{3.17} or \eqref{3.15}, we conclude that
  \begin{align}\label{eq3.13}
&2\bm{C}_{\alpha}\int_{\mathbb{R}^{4}_+}y^{\lambda_\alpha}|\bar{\nabla}u^*|^2~\psi_R(x)\chi_R(y)~\text{d}x\text{d}y + 2\bm{C}_{\beta}y^{\lambda_\beta}|\bar{\nabla}b^*|^2~\psi_R(x)\chi_R(y)~\text{d}x\text{d}y\nonumber\\
\leq&\sum_{i=1}^{6}I_i\nonumber\\
\lesssim&\underbrace{R^{2-\frac{9}{p}}\|(u,b)\|_{\mathrm{L}^p(B_{2R}\backslash B_{\frac{R}{2}})}^3}_{I_{1} + I_{2}+ I_{3} + I_{4}\lesssim} +I_7 \nonumber\\
&+  \underbrace{\|u\|_{\dot{H}^\alpha(\mathbb{R}^3)}\left(\int_{0}^{\frac{3}{2}R}\int_{B_{\frac{3}{2}R}\backslash B_{\frac{3}{4}R}}y^{\lambda_\alpha}|\bar{\nabla}u^*|^2~\text{d}x\text{d}y\right)^\frac{1}{2}+\|u\|_{\dot{H}^\alpha(\mathbb{R}^3)}\left(\int_{\frac{3}{4}R}^{\frac{3}{2}R}\int_{B_{\frac{3}{2}R}}y^{\lambda_\alpha}|\bar{\nabla}u^*|^2~\text{d}x\text{d}y\right)^\frac{1}{2}}_{I_{5}\lesssim}\nonumber\\
    &+ \underbrace{ \|b\|_{\dot{H}^\beta(\mathbb{R}^3)}\left(\int_{0}^{\frac{3}{2}R}\int_{B_{\frac{3}{2}R}\backslash B_{\frac{3}{4}R}}y^{\lambda_\beta}|\bar{\nabla}b^*|^2~\text{d}x\text{d}y\right)^\frac{1}{2}
  +\|b\|_{\dot{H}^\beta(\mathbb{R}^3)}\left(\int_{\frac{3}{4}R}^{\frac{3}{2}R}\int_{B_{\frac{3}{2}R}}y^{\lambda_\beta}|\bar{\nabla}b^*|^2~\text{d}x\text{d}y\right)^\frac{1}{2}}_{I_{6}\lesssim}.
  \end{align}
 In addition, since $u\in\dot{H}^\alpha(\mathbb{R}^3),~b\in\dot{H}^\beta(\mathbb{R}^3)$, we have
\begin{align}\label{3.19}
&\int_{0}^{2R}\int_{B_{2R}\backslash B_{\frac{R}{2}}}y^{\lambda_\alpha}|\bar{\nabla}u^*|^2~\text{d}x\text{d}y\rightarrow 0,~~\int_{\frac{3}{4}R}^{\frac{3}{2}R}\int_{B_{\frac{3}{2}R}}y^{\lambda_\alpha}|\bar{\nabla}u^*|^2 \text{d}x\text{d}y\rightarrow 0\nonumber\\
\text{ and }~~~~~& \nonumber\\
&\int_{0}^{2R}\int_{B_{2R}\backslash B_{\frac{R}{2}}}y^{\lambda_\beta}|\bar{\nabla}b^*|^2~\text{d}x\text{d}y\rightarrow0,
~~\int_{\frac{3}{4}R}^{\frac{3}{2}R}\int_{B_{\frac{3}{2}R}}y^{\lambda_\beta}|\bar{\nabla}b^*|^2 \text{d}x\text{d}y\rightarrow0,
\end{align}
as $R\rightarrow\infty$. Then it is obvious that $I_5\rightarrow 0$ and $I_6\rightarrow 0$ as $R\rightarrow\infty$.\\
Therefore, RHS of \eqref{eq3.13} converges to zero as $R\rightarrow\infty$ provided
\begin{equation}\nonumber
  \left\{
  \begin{array}{ll}
       2-\frac{9}{p}\leq 0  &  \Leftrightarrow p \leq \frac{9}{2}, \\
      2-\frac{3}{q}\leq 0  &  \Leftrightarrow q \leq \frac{3}{2}, \\
  \end{array}
\right.
\end{equation}
that is, $3\leq p \leq \frac{9}{2}$ and $1\leq q\leq\frac{3}{2}$.

Finally, we can obtain
$$2\bm{C}_{\alpha}\int_{\mathbb{R}^{4}_+}y^{\lambda_\alpha}|\bar{\nabla}u^*|^2~\text{d}x\text{d}y + 2\bm{C}_{\beta}\int_{\mathbb{R}^{4}_+}y^{\lambda_\beta}|\bar{\nabla}b^*|^2~\text{d}x\text{d}y\leq0,$$ i.e., $\bm{C}_{\alpha}\int_{\mathbb{R}^{4}_+}y^{\lambda_\alpha}|\bar{\nabla}u^*|^2~\text{d}x\text{d}y = 0$ and $\bm{C}_{\beta}\int_{\mathbb{R}^{4}_+}y^{\lambda_\beta}|\bar{\nabla}b^*|^2~\text{d}x\text{d}y = 0$,
which implies that $u$ and $b$ are almost constant everywhere according to Lemma \ref{le2.3} and \eqref{Eq2.1}$_{4}$. Since $(u,b)\in\mathrm{L}^p(\mathbb{R}^3)$ and $(u,b)$ is a smooth solution of \eqref{eq1.1}, we have $u=b=0$.

\subsection{Case $2\leq p \leq3$}\label{subsec3.2}

Firstly,  let $\phi_R(x)\in C_c^\infty(\mathbb{R}^3)$ be a non-negative cut-off function (for example, ref. \cite{YX2020}) as following:
\begin{eqnarray*}
  \phi_R(x)=\left\{
  \begin{array}{ll}
    1, & x\in B_{\frac{3}{2}R}\backslash B_{\frac{3}{4}R}, \\
    0, & x\in B_{2R}^c\cup B_{\frac{R}{2}}.
  \end{array}
\right.
\end{eqnarray*}
 And it is a simple matter to $$\|\nabla^s\phi_R\|_{L^{\infty}}\leq\frac{C}{R^s}.$$

 We notice that the estimation of $I_5$, $I_6$ and $I_7$ are the same as Eq. \eqref{3.10}, Eq. \eqref{3.16} and \eqref{3.17} in Subsection \ref{subsubsec3.1.1}, hence we only  need to estimate the nonlinear terms $I_1, ~I_2, ~I_3 \text{ and } I_4$.\\
In order to obtain the estimation of $I_{1}$, we have to estimate $\|u\|_{\mathrm{L}^3(B_{\frac{3}{2}R}\backslash B_{\frac{3}{4}R})}$ as follows
\begin{align}\label{3.18}
   &\|u\|_{\mathrm{L}^3(B_{\frac{3}{2}R}\backslash B_{\frac{3}{4}R})}\leq \|u\phi_R\|_{\mathrm{L}^3(\mathbb{R}^3)}\nonumber\\
\leq& C\|u\phi_R\|_{\mathrm{L}^2(\mathbb{R}^3)}^{1-\frac{1}{2\alpha}}\|\Lambda^\alpha(u\phi_R)\|_{\mathrm{L}^2(\mathbb{R}^3)}^{\frac{1}{2\alpha}}\hbox{\quad (by Gagliardo-Nirenberg inequality)}\nonumber\\
    \leq&C\|u\|_{\mathrm{L}^2(B_{2R}\backslash B_{\frac{R}{2}})}^{1-\frac{1}{2\alpha}}\left(\|\Lambda^\alpha u\|_{\mathrm{L}^2(B_{2R}\backslash B_{\frac{R}{2}})}^{\frac{1}{2\alpha}}\|\phi_R\|_{\mathrm{L}^\infty(\mathbb{R}^3)}^{\frac{1}{2\alpha}}+
    \|u\|_{\mathrm{L}^2(B_{2R}\backslash B_{\frac{R}{2}})}^{\frac{1}{2\alpha}}\|\Lambda^\alpha\phi_R\|_{\mathrm{L}^\infty(\mathbb{R}^3)}^{\frac{1}{2\alpha}}\right)\nonumber\\
    \leq&C\|u\|_{\mathrm{L}^2(B_{2R}\backslash B_{\frac{R}{2}})}^{1-\frac{1}{2\alpha}}\left(\|\Lambda^\alpha u\|_{\mathrm{L}^2(B_{2R}\backslash B_{\frac{R}{2}})}^{\frac{1}{2\alpha}}+\frac{1}{R^\frac{1}{2}}\|u\|_{\mathrm{L}^2(B_{2R}\backslash B_{\frac{R}{2}})}^{\frac{1}{2\alpha}}\right)\nonumber\\
    \leq&C\|u\|_{\mathrm{L}^2(B_{2R}\backslash B_{\frac{R}{2}})}^{1-\frac{1}{2\alpha}}\|\Lambda^\alpha u\|_{\mathrm{L}^2(B_{2R}\backslash B_{\frac{R}{2}})}^{\frac{1}{2\alpha}}+\frac{C}{R^\frac{1}{2}}\|u\|_{\mathrm{L}^2(B_{2R}\backslash B_{\frac{R}{2}})}.
\end{align}
Similar considerations apply to $\|b\|_{\mathrm{L}^3(B_{\frac{3}{2}R}\backslash B_{\frac{3}{4}R})}$
\begin{equation}\label{3.31}
 \|b\|_{\mathrm{L}^3(B_{\frac{3}{2}R}\backslash B_{\frac{3}{4}R})}\leq C\|b\|_{\mathrm{L}^2(B_{2R}\backslash B_{\frac{R}{2}})}^{1-\frac{1}{2\beta}}\|\Lambda^\beta b\|_{\mathrm{L}^2(B_{2R}\backslash B_{\frac{R}{2}})}^{\frac{1}{2\beta}}+\frac{C}{R^\frac{1}{2}}\|b\|_{\mathrm{L}^2(B_{2R}\backslash B_{\frac{R}{2}})}.
\end{equation}
 Thus, with the help of the H\"{o}lder's inequality and  Eq. \eqref{3.18}, we have
    \begin{align}\label{eq3.22}
I_1=&\int_{\mathbb{R}^{3}}(u\cdot\nabla\psi_{R})\rho|u|^2~\text{d}x\leq\frac{C}{R}\|\rho\|_{L^\infty(\mathbb{R}^3)}\|u\|_{\mathrm{L}^3(B_{\frac{3}{2}R}\backslash B_{\frac{3}{4}R})}^3\nonumber\\
\leq&\frac{C}{R}\|u\|_{\mathrm{L}^2(B_{2R}\backslash B_{\frac{R}{2}})}^{3-\frac{3}{2\alpha}}\|\Lambda^\alpha u\|_{\mathrm{L}^2(B_{2R}\backslash B_{\frac{R}{2}})}^{\frac{3}{2\alpha}}+\frac{C}{R^{\frac{5}{2}}}\|u\|_{\mathrm{L}^2(B_{2R}\backslash B_{\frac{R}{2}})}^3\nonumber\\
\overset{p\geq2}{\lesssim}&R^{\frac{3(1-\frac{2}{p})(6\alpha-3)}{4\alpha}-1}\|u\|_{\mathrm{L}^p(B_{2R}\backslash B_{\frac{R}{2}})}^{3-\frac{3}{2\alpha}}\|\Lambda^\alpha u\|_{\mathrm{L}^2(B_{2R}\backslash B_{\frac{R}{2}})}^{\frac{3}{2\alpha}}+ R^{2-\frac{9}{p}}\|u\|_{\mathrm{L}^p(B_{2R}\backslash B_{\frac{R}{2}})}^3.
    \end{align}
According to \eqref{eq3.22}, it is necessary to choose
\begin{equation}\label{e1}
  \left\{
     \begin{array}{ll}
     \frac{3(1-\frac{2}{p})(6\alpha-3)}{4\alpha}-1\leq 0 , \\
     2-\frac{9}{p}\leq 0 . \\
     \end{array}
   \right.
\end{equation}
Hence, we will obtain $I_1 \rightarrow 0$ as $R \rightarrow \infty$.

Then we apply Eq. \eqref{3.18} and Eq. \eqref{3.31} to estimate $I_2$
\begin{align}\label{eq3.23}
  I_2=&\int_{\mathbb{R}^{3}}(u\cdot\nabla\psi_{R})|b|^2~\text{d}x\leq\frac{C}{R}\|u\|_{\mathrm{L}^3(B_{\frac{3}{2}R}\backslash B_{\frac{3}{4}R})} \|b\|_{\mathrm{L}^3(B_{\frac{3}{2}R}\backslash B_{\frac{3}{4}R})}^2 \nonumber\\
  \leq&\frac{C}{R}\left(\|u\|_{\mathrm{L}^2(B_{2R}\backslash B_{\frac{R}{2}})}^{1-\frac{1}{2\alpha}}\|\Lambda^\alpha u\|_{\mathrm{L}^2(B_{2R}\backslash B_{\frac{R}{2}})}^{\frac{1}{2\alpha}}+\frac{C}{R^\frac{1}{2}}\|u\|_{\mathrm{L}^2(B_{2R}\backslash B_{\frac{R}{2}})}\right)\nonumber\\
  &~~\qquad\left(\|b\|_{\mathrm{L}^2(B_{2R}\backslash B_{\frac{R}{2}})}^{2-\frac{1}{\beta}}\|\Lambda^\beta b\|_{\mathrm{L}^2(B_{2R}\backslash B_{\frac{R}{2}})}^{\frac{1}{\beta}}+\frac{C}{R}\|b\|_{\mathrm{L}^2(B_{2R}\backslash B_{\frac{R}{2}})}^2\right)\nonumber\\
  \leq&\frac{C}{R}\left(\|u\|_{\mathrm{L}^2(B_{2R}\backslash B_{\frac{R}{2}})}^{2-\frac{1}{\alpha}}\|\Lambda^\alpha u\|_{\mathrm{L}^2(B_{2R}\backslash B_{\frac{R}{2}})}^{\frac{1}{\alpha}}+\|b\|_{\mathrm{L}^2(B_{2R}\backslash B_{\frac{R}{2}})}^{4-\frac{2}{\beta}}\|\Lambda^\beta b\|_{\mathrm{L}^2(B_{2R}\backslash B_{\frac{R}{2}})}^{\frac{2}{\beta}}\right)\nonumber\\
  &+\frac{C}{R^2}\|u\|_{\mathrm{L}^2(B_{2R}\backslash B_{\frac{R}{2}})}^{2-\frac{1}{\alpha}}\|\Lambda^\alpha u\|_{\mathrm{L}^2(B_{2R}\backslash B_{\frac{R}{2}})}^{\frac{1}{\alpha}}+\frac{C}{R^2}\|b\|_{\mathrm{L}^2(B_{2R}\backslash B_{\frac{R}{2}})}^4\nonumber\\
  &+\frac{C}{R^{\frac{3}{2}}}\|(u,b)\|_{\mathrm{L}^2(B_{2R}\backslash B_{\frac{R}{2}})}^{3-\frac{1}{\beta}}\|\Lambda^\beta b\|_{\mathrm{L}^2(B_{2R}\backslash B_{\frac{R}{2}})}^{\frac{1}{\beta}}+\frac{C}{R^{\frac{5}{2}}}\|(u,b)\|_{\mathrm{L}^2(B_{2R}\backslash B_{\frac{R}{2}})}^{3}.\nonumber\\
  &\hbox{\qquad\qquad\qquad\qquad\qquad\qquad\qquad\qquad\qquad\qquad\qquad(by Young's inequality)}\nonumber\\
  \overset{(p\geq2)}{\lesssim}&R^{\frac{3(1-\frac{2}{p})(2\alpha-1)-2\alpha}{2\alpha}}\|u\|_{\mathrm{L}^p(B_{2R}\backslash B_{\frac{R}{2}})}^{2-\frac{1}{\alpha}}\|\Lambda^\alpha u\|_{\mathrm{L}^2(B_{2R}\backslash B_{\frac{R}{2}})}^{\frac{1}{\alpha}}\nonumber\\
&+ R^{\frac{3(1-\frac{2}{p})(2\beta-1)-\beta}{\beta}}\|b\|_{\mathrm{L}^p(B_{2R}\backslash B_{\frac{R}{2}})}^{4-\frac{2}{\beta}}\|\Lambda^\beta b\|_{\mathrm{L}^2(B_{2R}\backslash B_{\frac{R}{2}})}^{\frac{2}{\beta}}\nonumber\\
&+ R^{\frac{3(1-\frac{2}{p})(2\alpha-1)-4\alpha}{2\alpha}}\|u\|_{\mathrm{L}^p(B_{2R}\backslash B_{\frac{R}{2}})}^{2-\frac{1}{\alpha}}\|\Lambda^\alpha u\|_{\mathrm{L}^2(B_{2R}\backslash B_{\frac{R}{2}})}^{\frac{1}{\alpha}}\nonumber\\
&+ R^{4-\frac{12}{p}}\|b\|_{\mathrm{L}^p(B_{2R}\backslash B_{\frac{R}{2}})}^4+ R^{\frac{3(1-\frac{2}{p})(3\beta-1)-3\beta}{2\beta}}\|(u,b)\|_{\mathrm{L}^p(B_{2R}\backslash B_{\frac{R}{2}})}^{3-\frac{1}{\beta}}\|\Lambda^\beta b\|_{\mathrm{L}^2(B_{2R}\backslash B_{\frac{R}{2}})}^{\frac{1}{\beta}}\nonumber\\
&+ R^{\frac{9(1-\frac{2}{p})-5}{2}}\|(u,b)\|_{\mathrm{L}^p(B_{2R}\backslash B_{\frac{R}{2}})}^{3}.
\end{align}
Furthermore, we can easily observe that:
\begin{align*}
I_3=2\int_{\mathbb{R}^{3}}(b\cdot u)(b\cdot \nabla\psi_R(x)) \text{d}x\leq \frac{C}{R}\|u\|_{\mathrm{L}^3(B_{\frac{3}{2}R}\backslash B_{\frac{3}{4}R})} \|b\|_{\mathrm{L}^3(B_{\frac{3}{2}R}\backslash B_{\frac{3}{4}R})}^2
\end{align*}
and
\begin{align*}
I_4=\int_{\mathbb{R}^{3}}\frac{|b|^2}{2}u\nabla\psi_R~\text{d}x\leq \frac{C}{R}\|u\|_{\mathrm{L}^3(B_{\frac{3}{2}R}\backslash B_{\frac{3}{4}R})} \|b\|_{\mathrm{L}^3(B_{\frac{3}{2}R}\backslash B_{\frac{3}{4}R})}^2.
\end{align*}
Therefore, we only need to use the estimation process of $I_{2}$ to estimate $I_3$ and $I_4$.
Hence, we need to take
\begin{equation}\label{e2}
  \left\{
     \begin{array}{ll}
     \frac{3(1-\frac{2}{p})(2\alpha-1)-2\alpha}{2\alpha}\leq 0 , \\
     \frac{3(1-\frac{2}{p})(2\beta-1)-\beta}{\beta}\leq 0 , \\
     \frac{3(1-\frac{2}{p})(2\alpha-1)-4\alpha}{2\alpha}\leq 0 ,\\
     4-\frac{12}{p}\leq 0 ,\\
     \frac{3(1-\frac{2}{p})(3\beta-1)-3\beta}{2\beta}\leq 0 ,\\
     \frac{9(1-\frac{2}{p})-5}{2}\leq 0.
     \end{array}
   \right.
\end{equation}
Thus, $I_2\rightarrow 0$, $I_3\rightarrow 0$ and $I_4\rightarrow 0$ hold true when $R\rightarrow \infty$.

Finally, combining the vanishing conditions for $I_1-I_7$, i.e. \eqref{e1}, \eqref{e2} and \eqref{3.19} with $1\leq q\leq\frac{3}{2}$,  the exponent $p$ must satisfy the following constraint condition:
\begin{equation}\label{3.35}
  \left\{
     \begin{array}{ll}
     2-\frac{9}{p}\leq 0   &  \hbox{$\Leftrightarrow p \leq \frac{9}{2}$,}\\
     4-\frac{12}{p}\leq 0  &  \hbox{$\Leftrightarrow p\leq 3$,}\\
     \frac{3(1-\frac{2}{p})(3\beta-1)-3\beta}{2\beta}\leq 0  &  \hbox{$\Leftrightarrow p\leq \frac{2(3\beta-1)}{2\beta-1}$,}\\
     \left.
\begin{aligned}
\frac{3\left(1-\frac{2}{p}\right)(2\alpha-1)-4\alpha}{2\alpha} &\leq 0 \\
\frac{3\left(1-\frac{2}{p}\right)(2\alpha-1)-2\alpha}{2\alpha} &\leq 0
\end{aligned}
\right\}  &  \hbox{$\Leftrightarrow 2 - \frac{6}{p} + 3(\frac{1}{p} - \frac{1}{2})\frac{1}{\alpha}\leq 0$}\\
     \end{array}
   \right.
\end{equation}
and
\begin{equation}\label{3.36}
     \begin{cases}
      \frac{3(1-\frac{2}{p})(6\alpha-3)}{4\alpha}-1 \leq 0,\\
      \frac{3(1-\frac{2}{p})(2\beta-1)-\beta}{\beta} \leq 0.
 \end{cases}
\end{equation}
For constraint conditions \eqref{3.35}, note that $\frac{2(3\beta-1)}{2\beta-1} > 3$ when $\alpha,~\beta>\frac{1}{2}$, so \eqref{3.35} simplifies to $p \leq 3$.

Next, we simplify constraint conditions \eqref{3.36}, which is divided into four cases based on the ranges of $\alpha$ and $\beta$:
\begin{enumerate}[label=(\roman*)]
  \item\label{i} For $\frac{1}{2}\leq\alpha\leq\frac{9}{14}$ and $\frac{1}{2}\leq\beta\leq\frac{3}{5}$, we note that
  $$\frac{3(1-\frac{2}{p})(6\alpha-3)}{4\alpha}-1 \leq 0 \text{~~and~~}  \frac{3(1-\frac{2}{p})(2\beta-1)-\beta}{\beta} \leq 0 \text{~always hold true.}$$
Hence, combining with Eq. \eqref{3.35}, we obtain $2\leq p \leq3$. The following cases can be proved similarly to Case \ref{i}, so details are omitted.
  \item For $\frac{1}{2}\leq\alpha\leq\frac{9}{14}$ and $\beta>\frac{3}{5}$,   $ \frac{3(1-\frac{2}{p})(2\beta-1)-\beta}{\beta}\leq0$ implies $ p \leq \frac{6(2\beta-1)}{5\beta-3}$. However, $\frac{6(2\beta-1)}{5\beta-3}>3$ is trivial. Combining  with \eqref{3.35}, we conclude $2 \leq p \leq 3$.
  \item For $\frac{9}{14}<\alpha<\frac{3}{4}$ and $\frac{1}{2}\leq\beta\leq\frac{3}{5}$, $\frac{3(1-\frac{2}{p})(6\alpha-3)}{4\alpha}-1 \leq 0$ implies $p \leq \frac{18(2\alpha-1)}{14\alpha-9}$.
   And  we also have $\frac{18(2\alpha-1)}{14\alpha-9}\geq3$ with $\frac{9}{14}<\alpha<\frac{3}{4}$.   Combining  with \eqref{3.35},  we get $2 \leq p \leq 3$.
  \item For $\frac{9}{14}<\alpha<\frac{3}{4}$ and $\beta>\frac{3}{5}$, we obtain $p\leq\min\{\frac{18(2\alpha-1)}{14\alpha-9}, \frac{6(2\beta-1)}{5\beta-3}\}$. Following the aforementioned procedure, we have $2 \leq p \leq 3$.
\end{enumerate}
Therefore,  the constraint conditions \eqref{3.35} and \eqref{3.36} can be simplified to  $2 \leq p \leq 3$   provided $\frac{1}{2}\leq\alpha<\frac{3}{4}$ and $\frac{1}{2}\leq\beta<1$.

 Considering $u\in\dot{H}^\alpha(\mathbb{R}^3),~b\in\dot{H}^\beta(\mathbb{R}^3)$ and Eq. \eqref{3.19}, when $R\rightarrow\infty$, we can obtain
$$2\bm{C}_{\alpha}\int_{\mathbb{R}^{4}_+}y^{\lambda_\alpha}|\bar{\nabla}u^*|^2~\text{d}x\text{d}y + 2\bm{C}_{\beta}\int_{\mathbb{R}^{4}_+}y^{\lambda_\beta}|\bar{\nabla}b^*|^2~\text{d}x\text{d}y\leq0,$$
which implies $\bm{C}_{\alpha}\int_{\mathbb{R}^{4}_+}y^{\lambda_\alpha}|\bar{\nabla}u^*|^2~\text{d}x\text{d}y=0$ and $\bm{C}_{\beta}\int_{\mathbb{R}^{4}_+}y^{\lambda_\beta}|\bar{\nabla}b^*|^2~\text{d}x\text{d}y=0$.
By Lemma \ref{le2.3} and $\eqref{Eq2.1}_4$, since $u$ is a smooth function in $L^p(\mathbb{R}^3)$, it is obvious that $u=b=0$.

In summary, combining Subsection \ref{subsubsec3.1.1} and Subsection \ref{subsec3.2}, we have $u=b=0$ for $\frac{1}{2}\leq\alpha,~\beta<1$ with $2\leq p\leq\frac{9}{2}$ and $1\leq q\leq\frac{3}{2}$.

On the other hand, together with $u=b=0$, equations \eqref{eq1.2} and $\eqref{eq1.1}_1$ imply that $\rho$ is constant on $\mathbb{R}^3$, completing the proof of Theorem \ref{th1.1}.
	

\section*{Author Contributions}
Zhenyuan Liu: 
Writing - original draft (equal).\\  Weihua Wang: Formal analysis; Writing - original draft (equal); Writing - review \& editing.

\section*{Acknowledgments}
This work was supported by National Natural Science Foundation of China (Grant No. 12271470). 

\section*{Funding}
This work was supported by  National Natural Science Foundation of China (Grant No. 12271470).
\section*{Conflicts of Interest}
The authors declare no conflicts of interest.

\section*{Data Availability Statement}
No data was used in this paper.
\section*{Declarations}

\noindent{\bf Consent to Publish declaration}: not applicable. This manuscript does not contain any individual person's data in any form.\\

\noindent{\bf Consent to Participate declaration}: not applicable. This research did not involve human participants or animals.\\

\noindent{\bf Ethics declaration }  not applicable.\\

\end{document}